\numberwithin{equation}{section}
\definecolor{refkey}{rgb}{0,0,1}
\definecolor{labelkey}{rgb}{1,0,0}
\newcommand{\ol}{\overline}
\newcommand{\calS}{\mathcal{S}}
\newcommand{\calP}{\mathcal{P}}
\newcommand{\calR}{\mathcal{R}}
\newcommand{\al}{\alpha}
\newcommand{\bt}{\beta}
\newcommand{\dt}{\delta}
\newcommand{\gm}{\gamma}
\newcommand{\eq} [1] {\begin{equation}\label{#1}\quad}
\newcommand{\en} {\end{equation}}
\newcommand{\diag}{\mathop{\rm diag}}
\newcommand{\Cof}{\mathop{\rm Cof}}
\newcommand{\cof}{\mathop{\rm cof}}
\newcommand{\const}{\mathop{\rm Const}}
\newcommand{\tm}{\times}
\newcommand{\sbs}{\subset}
\newcommand{\wdt}{\widetilde}
\newcommand{\iy}{\infty}
\newcommand{\bZ}{\mathbb{Z}}
\newcommand{\bT}{\mathbb{T}}
\newcommand{\bC}{\mathbb{C}}
\newcommand{\bQ}{\mathbb{Q}}
\newcommand{\bD}{\mathbb{D}}
\newtheorem{theorem}{\bf  Theorem}[section]
\newtheorem{lemma}{\bf  Lemma}[section]
\newtheorem{corollary}{\bf \sc Corollary}[section]
\newtheorem{remark}{ \sc Remark}[section]
\subjclass{47A68}
\begin{document}
	\begin{center}
		{\bf On the exact spectral factorization of rational matrix functions with applications to paraunitary filter banks}\\[5mm]
		Lasha Ephremidze, Gennady Mishuris, Ilya  Spitkovsky
	\end{center}

\vskip+0.6cm

	{\small{\bf Abstract.}  In this paper, we enhance a recent algorithm for approximate spectral factorization of matrix functions, extending its capabilities to precisely factorize rational matrices when an exact lower-upper triangular factorization is available. This novel approach leverages a fundamental component of the improved algorithm for the precise design of rational paraunitary filter banks, allowing for the predetermined placement of zeros and poles. The introduced algorithm not only advances the state-of-the-art in spectral factorization but also opens new avenues for the tailored design of paraunitary filters with specific spectral properties, offering significant potential for applications in signal processing and beyond. }

\vskip+0.2cm

\vskip+0.2cm \noindent  {\small {\em  MSC:} 47A68, 65T60, 15A83}	

\vskip+0.2cm\noindent  {\small {\em Keywords: } Matrix spectral factorization,
	paraunitary matrix functions, matrix completion problem}

\section{Introduction} \label{Sect1}
Spectral factorization is the process by which a positive (scalar or matrix-valued) function $S$ is expressed in the form
\begin{equation}\label{SF1}
S(t)=S_+(t)S_+^*(t), \;\;\;t\in \mathbb{T},
\end{equation}
where $S_+$ can be analytically extended inside the unit circle $ \mathbb{T}$ and $S_+^*$ is its Hermitian conjugate. There are multiple contexts in which this factorization naturally arises, e.g., linear prediction theory of stationary processes \cite{Kolm41, Wie58}  optimal control \cite{AndesonMoore, Davis} digital communications \cite{Fischer, BLM} etc. Spectral factorization is used to construct certain wavelets \cite{Dau} and multiwavelets \cite{SA4} as well. Therefore, many authors contributed to the development  of different computational methods for spectral factorization (see the survey papers  \cite{Kuc, SayKai} and  references therein, and also \cite{Bott13, Jaf} for more recent results). As opposed to the scalar case, in which an explicit formula exists for factorization:
$S_+(z)=\exp\left(\frac 1{4\pi}
\int\nolimits_\bT\frac{t+z}{t-z}\log
S(t)\,dt\right)$, in general, there is no explicit expression for spectral factorization in the matrix case. The existing algorithms for approximate factorization are, therefore, more demanding in the matrix case.

The Janashia-Lagvilava algorithm \cite{JL99, IEEE2011} is a relatively new method of matrix spectral factorization which proved to be effective, see, e.g., \cite{IEEE2018, CNR, TR22}.  Several generalizations of this method can be found in \cite{IEC21, JMAA22}. In particular, the method is capable to factorize some singular matrix functions with a much higher accuracy than other existing methods (see \S 3 in \cite{TR22}).  Nevertheless, the algorithm, as it was designed originally, is not able to factorize exactly even simple polynomial matrices. In the present paper, we cast a new light on the capabilities of the method eliminating the above-mentioned flaw.	
{ The exact matrix spectral factorization is important as it may be used as a key step in the construction of certain wavelet or multiwavelet filter banks with high precision coefficients \cite{SA4}. Furthermore, we construct a wide class of rational paraunitary matrices, including singular ones with some entries having zeros on the boundary; this construction process is of independent interest.}

Let $\calP^+_N$ be the set of polynomials of degree at most $N$ and for $p(z)=\sum_{k=0}^Nc_kz^k\in\calP^+_N$, let $\wdt{p}(z)=\sum_{k=0}^N\ol{c_k}z^{-k}$. Let also $\calP^-_N:=\{\wdt{p}:p\in\calP^+_N\}$. The core of the Janashia-Lagvilava method is a constructive proof of the following

{\bf Theorem} ( \cite[Th. 1]{IEEE2011}, \cite[Th. 1]{EL2014}). {\em Let $F$ be a $($Laurent$)$ polynomial $m\tm m$ matrix of the form
\begin{equation}
\label{2.1}
F(z)=\begin{pmatrix}1&0&0&\cdots&0&0\\
0&1&0&\cdots&0&0\\
0&0&1&\cdots&0&0\\
\vdots&\vdots&\vdots&\vdots&\vdots&\vdots\\
0&0&0&\cdots&1&0\\
\phi_{1}(z)&\phi_{2}(z)&\phi_{3}(z)&\cdots&\phi_{m-1}(z)&1
\end{pmatrix},
\end{equation}
where
$$
\phi_j\in\mathcal{P}^-_N, \;\;j=1,2,\ldots, m-1.
$$
Then, there exists a unique paraunitary matrix polynomial of the form
\begin{equation}
\label{2.5}
U(z)=\begin{pmatrix}u_{11}(z)&u_{12}(z)&\cdots&u_{1m}(z)\\
u_{21}(z)&u_{22}(z)&\cdots&u_{2m}(z)\\
\vdots&\vdots&\vdots&\vdots\\
u_{m-1,1}(z)&u_{m-1,2}(z)&\cdots&u_{m-1,m}(z)\\[3mm]
\widetilde{u_{m1}}(z)&\widetilde{u_{m2}}(z)&\cdots&\widetilde{u_{mm}}(z)\\
\end{pmatrix},
\end{equation}
where $\;\;\;u_{ij}(z)\in\mathcal{P}^+_N$, $1\leq i,j\leq m$, with determinant $1$,
\begin{equation} \label{2.4}
 \det U(z)=1, \text{ for all } z \text{ wherever } U(z) \text{ is defined},
\end{equation}
satisfying
\begin{equation} \label{U1I}
U(1)=I_m
\end{equation}}
and such that
\begin{equation} \label{2.3}
FU\in (\calP^+_N)^{m\tm m}.
\end{equation}
Here \eqref{2.3} means that $FU$ is an $m\tm m$ matrix with the entries from $\calP^+_N$. A matrix polynomial $U$ is called paraunitary if
$$
U(z)\wdt{U}(z)=I_m,
$$
where $\wdt{U}(z)=[\wdt{u_{ji}}]$ for $U(z)=[u_{ij}(z)]$, and $I_m$ is the $m\tm m$ identity matrix.

Let $\calR$ be the set of rational functions in the complex plane, $\calR_+\sbs\calR$ be the set of rational functions with the poles outside the open unit disk $\bD:=\{z\in\bC:|z|<1\}$, and let $\calR_-\sbs\calR$ be the set of rational functions with the poles inside $\bD$. It follows readily from well-known facts (see Section \ref{Sect5}) that, in the above theorem, if $\phi_j\in\calR_-$ in \eqref{2.1}, $j=1,2,\ldots,m-1$, then $u_{ij}\in\calR_+$ in \eqref{2.5}, $1\leq i,j\leq m$. Nevertheless, in the existing form, the Janashia-Lagvilava algorithm would find only a polynomial approximation to the rational entries $u_{ij}$. In this paper, we construct them exactly. In particular, we provide a constructive proof of the following
\begin{theorem}\label{Th1}
Let $F$ be an $m\tm m$ matrix function of the form \eqref{2.1}, where $\phi_j\in\calR_-$. Assume also that the poles of functions $\phi_j$ are known exactly. Then one can explicitly construct the unique paraunitary matrix function $U$ of the form \eqref{2.5}, where
\begin{equation}\label{uij}
 u_{ij}\in\calR_+, \;\text{ for } 1\leq i,j\leq m,
\end{equation}
satisfying \eqref{2.4} and \eqref{U1I}, such that
\begin{equation} \label{FUR}
FU\in (\calR_+)^{m\tm m}.
\end{equation}
\end{theorem}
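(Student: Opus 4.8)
The plan is to reduce the rational problem to the polynomial Theorem by exploiting the exact knowledge of the poles, and to carry out the Janashia--Lagvilava construction with rational rather than polynomial arithmetic. First I would fix the common denominator: let $d$ be the monic polynomial whose zeros are exactly the poles of $\phi_1,\dots,\phi_{m-1}$ in $\bD$ (with multiplicity), so that $d\in\calP^+$ has all its zeros in $\bD$ and $\phi_j=\pi_j/d$ with $\pi_j$ polynomials. The para-conjugate $\wdt d\in\calP^-$ then has all its zeros outside $\ol{\bD}$, so $1/\wdt d\in\calR_+$. Because the poles are known, the explicit additive splitting of a rational function into an $\calR_-$ part (the principal parts at the poles inside $\bD$) and an $\calR_+$ remainder, via partial fractions, is a computable projection $P_-$ on rational functions; this exact projection replaces the ``drop the negative Fourier coefficients'' step that the original algorithm performs only approximately.

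Second, I would set up the defining equations exactly as in the polynomial case, but over $\calR$. The rows $i<m$ of $FU$ coincide with the corresponding rows of $U$ and are automatically in $\calR_+$, so the only genuine constraints are the analyticity of the last row, $(FU)_{mj}=\sum_{k<m}\phi_k u_{kj}+\wdt{u_{mj}}\in\calR_+$, together with paraunitarity $U\wdt U=I_m$, the determinant condition $\det U=1$, and the normalization $U(1)=I_m$. The analyticity constraint pins down the last row from the first $m-1$ rows through $\wdt{u_{mj}}=-P_-\big(\sum_{k<m}\phi_k u_{kj}\big)$, the remaining constant freedom being fixed later by the normalization; this is exactly where knowing $d$ is essential, since $\sum_{k<m}\phi_k u_{kj}$ has its poles in $\bD$ precisely at the zeros of $d$.

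Third, I would run the recursive scheme of the original construction to produce the first $m-1$ rows, reducing the $m\times m$ problem to lower size and closing it with a single scalar spectral (lower--upper) factorization. In the rational setting this scalar step factors an explicitly given positive rational function whose poles and zeros are governed by $d\wdt d$; since $d$ is known the factorization is exact, producing $\calR_+$ entries with poles only at the zeros of $\wdt d$, i.e.\ outside $\ol{\bD}$. The conditions $\det U=1$ and $U(1)=I_m$ are then imposed by multiplying with an explicit constant unitary, as in the polynomial case, which does not disturb the class $\calR_+$.

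The main obstacle I expect is the bookkeeping of the pole structure, namely showing that the constructed entries genuinely lie in $\calR_+$ (no stray poles inside $\bD$) while paraunitarity holds exactly rather than approximately. Concretely, one must verify that the rational analogue of the recursion preserves analyticity in $\bD$ at every step --- that the cancellations forced by $P_-$ together with the factorization of $d\wdt d$ leave poles only outside $\ol{\bD}$ --- and that the final $U$ is exactly paraunitary. Uniqueness then follows as in the polynomial theorem: two solutions would differ by a paraunitary $V\in\calR_+$ whose inverse $\wdt V$ also lies in $\calR_+$ and with $V(1)=I_m$, which forces $V=I_m$. The a priori existence of $U$ with $u_{ij}\in\calR_+$ is supplied by the well-known facts cited before the statement; the substance of the proof is that, once $d$ is available, the whole construction can be carried out exactly.
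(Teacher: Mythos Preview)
Your second step is on target, but the third step --- ``run the recursive scheme of the original construction \ldots\ reducing the $m\times m$ problem to lower size and closing it with a single scalar spectral (lower--upper) factorization'' --- is where the plan goes wrong. Neither the polynomial Janashia--Lagvilava construction nor the paper's rational extension proceeds by dimension reduction, and no scalar spectral factorization enters the proof of Theorem~1.1 at all. You may be conflating Theorem~1.1 with Theorem~1.2, where a recursive sweep over leading principal minors does occur; for Theorem~1.1 the construction is a single linear-algebra problem per column.

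The missing structural idea is that you have written down only \emph{half} of the analyticity constraints. The condition $FU\in\calR_+^{m\times m}$ gives the last-row constraint you stated, but one also needs $R_+^{-1}=U^{-1}F^{-1}=\wdt{U}F^{-1}\in\calR_+^{m\times m}$, i.e.\ the spectral factor must be invertible in $\bD$. Writing this out with the explicit $F^{-1}$ yields, for each column $(u_{1j},\dots,u_{m-1,j},\wdt{u_{mj}})$, the conditions $\phi_i u_{mj}-\wdt{u_{ij}}\in\calR_+$ for $i=1,\dots,m-1$. Together with your last-row condition these make up the system~(4.1) of the paper, and it is these first $m-1$ conditions that force each $\wdt{u_{ij}}$ to be a constant plus principal parts supported exactly at the poles of $\phi_i$. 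Once that pole structure is pinned down, every $\wdt{u_{ij}}$ is a finite linear combination of known simple fractions with unknown coefficients; matching the negative Laurent coefficients at each pole (your $P_-$ projection, made concrete) and imposing $U(1)=I_m$ produces a square linear system in those coefficients. Existence and uniqueness of $U$ --- established a priori by taking the rational spectral factor $R_+$ of $F\wdt F$ with $R_+(1)=F(1)$ and setting $U=F^{-1}R_+$ --- guarantee the system is uniquely solvable, so one simply solves it. That is the whole construction: no recursion, no auxiliary factorization.
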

Here, for $u\in\calR$, it is assumed that $\wdt{u}(z)=\ol{u(1/\ol{z})}$, which coincides with the introduced definition for $u\in\calP^+_N$.

The first step in the Janashia-Lagvilava algorithm is Cholesky-like factorization of \eqref{SF1}:
\begin{equation}\label{SMM}
  S(z)=M(z)M^*(z),
\end{equation}
where $M$ is the lower triangular matrix with the corresponding scalar spectral factors on the diagonal:
\begin{equation}\label{TrM}
  M(z)=\begin{pmatrix}f^+_1(z)&0&\cdots&0&0\\
\xi_{21}(z)&f^+_2(z)&\cdots&0&0\\
\vdots&\vdots&\vdots&\vdots&\vdots\\
\xi_{r-1,1}(z)&\xi_{r-1,2}(z)&\cdots&f^+_{r-1}(z)&0\\
\xi_{r1}(z)&\xi_{r2}(z)&\cdots&\xi_{r,r-1}(z)&f^+_r(z)
\end{pmatrix}.
\end{equation}
If $S$ is a polynomial matrix function, then the entries of $M$ are rational functions.

Relying on Theorem \ref{Th1}, we prove the following
\begin{theorem} \label{Th2}
Let $S$ be an $r\tm r$ polynomial matrix function which is positive definite $($a.e.$)$ on $\bT$, and let \eqref{SMM} be its lower-upper factorization. If the  entries of \eqref{TrM} and the poles of the functions $\xi_{ij}$ inside $\bT$ are known exactly, then the spectral factorization of $S$ can also be found exactly.
\end{theorem}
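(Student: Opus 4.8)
The plan is to reduce the matrix spectral factorization of $S$ to $r-1$ successive, \emph{exact} applications of Theorem \ref{Th1}. Starting from \eqref{SMM}, note that any spectral factor $S_+$ produces $V:=S_+^{-1}M$ with $VV^*=I$ on $\bT$; being rational and unitary-valued on $\bT$, such $V$ is paraunitary, so $S_+=M\wdt V$. Hence it suffices to construct explicitly a paraunitary $U$ (with $\det U=1$ and $U(1)=I_r$) such that $MU\in(\calR_+)^{r\tm r}$; the invertibility of $S_+:=MU$ in $\calR_+$ will be recovered at the very end from a determinant computation. I would build $U$ by induction on the size $k$ of the leading principal submatrix $M^{(k)}$ of $M$, which, $M$ being lower triangular, is again of the form \eqref{TrM}. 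The base $k=1$ is trivial: $M^{(1)}=f^+_1$ is already a scalar spectral factor and $U^{(1)}=1$.

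For the inductive step assume a paraunitary $U^{(k-1)}$ with $\det U^{(k-1)}=1$, $U^{(k-1)}(1)=I_{k-1}$, and $M^{(k-1)}U^{(k-1)}=:S_+^{(k-1)}\in(\calR_+)^{(k-1)\tm(k-1)}$ has been found. Pad it to $\hat U=\diag(U^{(k-1)},1)$. Since the top-right block of $M^{(k)}$ vanishes and its new bottom row is $(\rho,f^+_k)$ with $\rho=(\xi_{k1},\dots,\xi_{k,k-1})$, one computes
\[
M^{(k)}\hat U=\begin{pmatrix}S_+^{(k-1)}&0\\ \rho\,U^{(k-1)}&f^+_k\end{pmatrix}
=\begin{pmatrix}S_+^{(k-1)}&0\\0&f^+_k\end{pmatrix}\begin{pmatrix}I_{k-1}&0\\ \phi&1\end{pmatrix},
\]
where $\phi=(f^+_k)^{-1}\rho\,U^{(k-1)}$ is a rational row vector; the left factor lies in $(\calR_+)^{k\tm k}$ because every $f^+_i$ and every entry of $S_+^{(k-1)}$ does. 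The decisive observation is that the poles of the entries $\phi_j$ \emph{inside} $\bT$ come only from $\xi_{kj}$, since $U^{(k-1)}\in\calR_+$ and $(f^+_k)^{-1}\in\calR_+$ contribute no inner poles; these poles are known by hypothesis, so the additive splitting $\phi_j=\phi_j^++\phi_j^-$ with $\phi_j^+\in\calR_+$ and $\phi_j^-\in\calR_-$ can be carried out exactly. Using that unit lower-triangular matrices with a single nontrivial bottom row add their bottom rows under multiplication, factor
\[
\begin{pmatrix}I_{k-1}&0\\ \phi&1\end{pmatrix}=\begin{pmatrix}I_{k-1}&0\\ \phi^+&1\end{pmatrix}\begin{pmatrix}I_{k-1}&0\\ \phi^-&1\end{pmatrix}=:G^+G^-,
\]
where $G^-$ is precisely of the form \eqref{2.1} with $\phi_j^-\in\calR_-$.

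Now I apply Theorem \ref{Th1} to $G^-$: it yields an exact paraunitary $W$ of the form \eqref{2.5} with $\det W=1$, $W(1)=I_k$, and $G^-W\in(\calR_+)^{k\tm k}$. Setting $U^{(k)}:=\hat U\,W$ (paraunitary, with $\det U^{(k)}=1$ and $U^{(k)}(1)=I_k$ by the inductive normalizations), I obtain
\[
M^{(k)}U^{(k)}=\diag\big(S_+^{(k-1)},f^+_k\big)\,G^+\,(G^-W),
\]
a product of three factors each in $(\calR_+)^{k\tm k}$, so $S_+^{(k)}:=M^{(k)}U^{(k)}\in(\calR_+)^{k\tm k}$. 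Crucially $G^+$ sits to the \emph{left} of $G^-$, so the right-multiplication by $W$ that cleans $G^-$ does not disturb the already-analytic factors. Taking $k=r$ gives $U:=U^{(r)}$ and $S_+:=MU\in(\calR_+)^{r\tm r}$, all constructed by exact operations.

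It remains to verify that $S_+$ is a genuine spectral factor, i.e. $S_+^{-1}\in(\calR_+)^{r\tm r}$. Since $\det U=1$ and $M$ is triangular, $\det S_+=\det M=\prod_{i=1}^r f^+_i$, a product of scalar spectral factors and hence outer; therefore $(\det S_+)^{-1}\in\calR_+$ and $S_+^{-1}=(\det S_+)^{-1}\adj S_+\in(\calR_+)^{r\tm r}$, so $S=S_+S_+^*$ is the desired exact factorization. I expect the only genuinely delicate point to be the exactness of the splitting $\phi_j=\phi_j^++\phi_j^-$: one must confirm that neither $U^{(k-1)}$ nor $(f^+_k)^{-1}$ introduces new inner poles, so that the \emph{known} inner poles of the $\xi_{kj}$ determine $\phi_j^-$ completely. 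This is exactly where the hypothesis that the entries of \eqref{TrM} and the inner poles of the $\xi_{ij}$ are known exactly enters, and it is precisely what upgrades the classical (approximate) Janashia–Lagvilava step into an exact one.
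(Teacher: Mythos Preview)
Your inductive scheme is exactly the paper's: pad the previously built paraunitary to size $k$, peel off a factor of the form \eqref{2.1} with $\phi_j^-\in\calR_-$, and invoke Theorem~\ref{Th1}. The final determinant argument showing $S_+^{-1}\in(\calR_+)^{r\times r}$ is a clean addition.

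There is, however, a genuine error in the justification of the ``decisive observation''. You assert that $U^{(k-1)}\in(\calR_+)^{(k-1)\times(k-1)}$, so that the inner poles of $\phi=(f_k^+)^{-1}\rho\,U^{(k-1)}$ come only from the $\xi_{kj}$. This is false: the matrix $W$ produced by Theorem~\ref{Th1} has the form \eqref{2.5}, whose \emph{last row} consists of entries $\wdt{u_{mj}}$ with $u_{mj}\in\calR_+$; by \eqref{um} these $\wdt{u_{mj}}$ have poles precisely at the points $a_{mk}\in\bD$. These inner poles accumulate in the product $U^{(k-1)}=\hat U^{(k-2)}W_{k-1}$ (the $(k-1)$-st row, and after further products also earlier rows, will carry them). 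Hence $\rho\,U^{(k-1)}$ generally acquires inner poles from $U^{(k-1)}$, not only from $\rho$.

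The repair is straightforward and is what the paper (implicitly) relies on: by the explicit form \eqref{ui}--\eqref{um}, the inner poles of the last row of each $W_j$ lie among the inner poles of the $\phi^-$'s used at step $j$, which by induction lie among the known inner poles of the $\xi_{ij}$ with $i\le j$. Thus at step $k$ the inner poles of $\phi$ are contained in the (finite, known) set $\bigcup_{i\le k,\,j<i}\{\text{inner poles of }\xi_{ij}\}$, and the splitting $\phi_j=\phi_j^++\phi_j^-$ can still be performed exactly. Replace your claim ``$U^{(k-1)}\in\calR_+$'' by this inductive bookkeeping of inner poles and the proof goes through.
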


{
\begin{remark}
  We emphasize that the diagonal entries of \eqref{TrM} may have zeros on $\mathbb{T}$, however, we do not require knowledge  of their exact locations.
\end{remark}

\begin{remark}
Observe that \eqref{TrM}  in its turn yields  the exact factorization of the determinant. As demonstrated in \cite{AAM}, knowing the latter is necessary and sufficient for the more general Wiener-Hopf factorization to be carried out exactly. The algorithm described in \cite{AAM}, while allowing to exactly factorize any polynomial matrix functions with a factorable determinant $($possessing an arbitrary set of partial indices, unstable among others$)$, does not work in  singular cases where the determinant has zeros on $\bT$. As mentioned in the previous remark, our algorithm does not have this restriction.
\end{remark}
}

Polynomial paraunitary matrix functions  play an important role in the theory of wavelet matrices and paraunitary filter banks (see \cite{RW, Vai, EL2014}). They are also known as multidimensional finite impulse response (FIR) lossless filters, and designing such filters with specific characteristics is of  significant practical importance. In particular, construction of a matrix FIR lossless filter with a given first row, known as the wavelet matrix completion problem, has a long history with various  solutions proposed by a number of authors \cite{EH,HZ,LLS,Park,EL2014, ACHA21}.
Theorem \ref{Th1} leads to a solution of the rational paraunitary matrix completion problem for a broad class of given first rows. Namely, we prove the following
\begin{theorem} \label{Th3}
  Let
  \begin{equation}\label{U1z}
    V_1=(v_1, v_2,\ldots,\wdt{v_m}), \text{ where }v_i\in\calR_+ \text{ for } i=1,2,\ldots,m,
  \end{equation}
be such that
\begin{equation}\label{112}
V_1(z)\wdt{V_1}(z)=\sum\nolimits_{i=1}^m v_i(z)\wdt{v_i}(z)=1 \left( \Longleftrightarrow   \sum\nolimits_{i=1}^m |v_i(t)|^2=1 \text{ for each }t\in\bT\right).
\end{equation}
If
\begin{equation}\label{Coronal2}
  \sum\nolimits_{i=1}^m |v_i(z)|>0 \text{ for each } z\in\bD,
\end{equation}
then one can precisely construct a paraunitary matrix $V$ with the first row \eqref{U1z}.
\end{theorem}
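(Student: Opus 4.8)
The plan is to reduce the rational paraunitary completion problem of Theorem~\ref{Th3} to an application of Theorem~\ref{Th1}. Given the prescribed first row $V_1=(v_1,\ldots,v_{m-1},\wdt{v_m})$ satisfying the normalization \eqref{112}, the goal is to produce $m-1$ additional rows so that the resulting matrix $V$ is paraunitary. The natural strategy is to first bring $V_1$ into the standard position occupied by the bottom row of \eqref{2.1}, then invoke the existing construction, and finally undo the transformation.

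\textbf{Step 1: normalization of the last entry.} Condition \eqref{Coronal2} guarantees that $\sum_i |v_i(z)|>0$ throughout $\bD$, which (together with the fact that $v_i\in\calR_+$) should imply that $v_m$ has no zeros inside $\bD$, hence $\wdt{v_m}$ is an invertible element of $\calR_+$ in a suitable sense. I would divide the row through by $\wdt{v_m}$, setting $\phi_j=v_j/\wdt{v_m}$ for $j=1,\ldots,m-1$, so that the normalized row becomes $(\phi_1,\ldots,\phi_{m-1},1)$. The key point here is to verify that each $\phi_j$ lands in $\calR_-$: since $v_j\in\calR_+$ has poles outside $\bD$ and $\wdt{v_m}$ has its zeros (equivalently, the poles of $1/\wdt{v_m}$) inside $\bD$, the quotient should indeed lie in $\calR_-$. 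This is precisely the hypothesis needed to feed the row into \eqref{2.1}.

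\textbf{Step 2: apply Theorem~\ref{Th1}.} With $\phi_j\in\calR_-$ in hand, Theorem~\ref{Th1} produces a unique paraunitary $U$ of the form \eqref{2.5} with entries in $\calR_+$, determinant $1$, and $U(1)=I_m$, such that $FU\in(\calR_+)^{m\tm m}$, where $F$ is the matrix \eqref{2.1} built from these $\phi_j$. The bottom row of $F$ is exactly the normalized first row from Step~1 (up to the transposition between first-row and last-row conventions, which I would fix by an explicit permutation/reversal of the indexing). I would then assemble $V$ from $U$ together with the scalar factor $\wdt{v_m}$ reinstated, checking that multiplying the relevant row by a paraunitary scalar preserves paraunitarity of the whole matrix and restores the prescribed first row \eqref{U1z}. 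The paraunitarity identity $V\wdt{V}=I_m$ follows from that of $U$ combined with $\wdt{v_m}\cdot\widetilde{\wdt{v_m}}=v_m\wdt{v_m}$ and the normalization \eqref{112}.

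\textbf{The main obstacle} I anticipate is the bookkeeping in Step~1: justifying rigorously that \eqref{Coronal2} forces $v_m$ to be zero-free on $\bD$ and hence that each $\phi_j\in\calR_-$, and simultaneously that the overall scaling by $\wdt{v_m}$ does not introduce spurious poles inside $\bD$ into the final entries of $V$. One must be careful because $v_m$ may have zeros \emph{on} $\bT$, so $\wdt{v_m}$ is not boundedly invertible there; the construction must tolerate these boundary zeros, which is exactly the singular regime the remarks flag as the novel feature. I would handle this by working with the factorization $v_m=\wdt{B}\,h$ where $B$ collects the boundary and the would-be interior zeros in a controlled way, and appeal to the normalization \eqref{112} to ensure the boundary behavior of all $v_i$ is compatible, so that the reconstructed $V$ has entries in $\calR_+$ as required.
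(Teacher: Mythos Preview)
Your plan has a genuine gap at Step~1, and the subsequent steps do not recover from it.

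First, condition~\eqref{Coronal2} says only that the $v_i$ have no \emph{common} zero in $\bD$; it does not force any single $v_i$, in particular $v_m$, to be zero-free there. So the assertion that $v_m$ (hence $\wdt{v_m}$) is invertible in the required sense is unjustified. Second, even granting that $\wdt{v_m}$ had all its zeros in $\bD$, the quotient $\phi_j=v_j/\wdt{v_m}$ would still carry the poles of $v_j\in\calR_+$, which lie in $\bT_-\cup\bT$; thus $\phi_j$ generically has poles both inside and outside $\bT$ and is \emph{not} in $\calR_-$. (Already in the paper's own numerical example of \S8.2 one gets $v_1/\wdt{v_3}=3(6z+5)/(5z+6)$, whose only pole is at $-6/5\notin\bD$.) So Theorem~\ref{Th1} cannot be invoked with these $\phi_j$. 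Third, the ``reinstate $\wdt{v_m}$'' step is problematic: multiplying a single row of a paraunitary matrix by a scalar function preserves paraunitarity only when that scalar has modulus $1$ on $\bT$, which $\wdt{v_m}$ does not in general.

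The paper proceeds quite differently. It does not normalize $V_1$ to look like the last row of $F$; instead it chooses $(\phi_1,\ldots,\phi_{m-1})\in\calR_-^{1\times(m-1)}$ so that the given $V_1$ becomes a \emph{solution} of the auxiliary system~\eqref{7.1}. This is where \eqref{Coronal2} actually enters: it is equivalent (Bezout/corona) to the existence of $h_i\in\calR_+$ with $\sum_i h_i v_i=1$, and Lemma~\ref{LemNSC} shows that $\phi_i:=\big[(\wdt{v_i}-h_i)/v_m\big]^-$ then does the job. Applying Theorem~\ref{Th1} to these $\phi_i$ yields $U$ with $U(1)=I_m$ whose columns span the solution space of~\eqref{7.1}; by Corollary~\ref{Cor4.2} the unique solution with value $C:=V_1(1)$ at $z=1$ is $\sum_j c_j\hat U_j$, hence equals $V_1$. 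Choosing a constant unitary $W$ with first column $C$, the paraunitary matrix $V=UW$ has first column $V_1$. The prescribed data thus enters as a member of the solution space, not as the coefficient row $(\phi_1,\ldots,\phi_{m-1})$.
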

Theorem \ref{Th3} enables us to design multidimensional rational lossless filters with preassigned zeros and poles of the entries in the first row.

The paper is organized as follows: after notation (Section 2) and preliminary observations (Section 3), we prove some auxiliary lemmas in Section 4. Proofs of Theorems~\ref{Th1} and \ref{Th2} are given in Section 5 and 6, respectively. The matrix completion problem is solved in Section 7, while the last Section 8 provides some numerical examples of exact spectral factorization and rational paraunitary matrix construction.

\section{Notation and definitions} \label{Sect2}

This section summarizes the notation used in the paper, with some already introduced in the introduction.

Let $\bT:=\{z\in\bC:|z|=1\}$ be the unit circle in the complex plane, $$\bT_+= \bD:=\{z\in\bC:|z|<1\} \text{ and } \bT_-=\{z\in\bC:|z|>1\}\cup\{\infty\}.$$

For a set $\calS$, let $\calS^{m\times n}$ be the set of $m\times n$ matrices with the entries from $\calS$. Accordingly, the term 'matrix function' denotes matrices with entries as functions, or, equivalently, functions that yield matrices as values.  $I_m=\diag(1,1,\ldots,1)\in\bC^{m\times m}$ stands for the $m\tm m$ identity matrix and  $0_{m\times n}$ is the $m\times n$ matrix consisting of zeros. For a matrix (or a matrix function) $M=[M_{ij}]$, $M^T=[M_{ji}]$ denotes its transpose, and $M^*=[\ol{M_{ji}}]$ denotes its hermitian conjugate, while $[M]_{m\tm m}$ stands for its upper-left $m\tm m$ principal submatrix. On the other hand, for $a\in\bC$, we let $a^\star=\ol{1/a}$.

Let $\mathcal{P}$ be the set of Laurent polynomials with the coefficients in $\bC$:
\begin{equation}\label{calP}
  \mathcal{P}:=\left\{\sum\nolimits_{k=k_1}^{k_2} c_kz^k: c_k\in\bC,\;k_1,k_2\in\bZ;\;k_1\leq k_2   \right\}.
\end{equation}
We also consider the following subsets of $\mathcal{P}$: $\mathcal{P}^+$, $\mathcal{P}^-$, $\mathcal{P}^+_N$ and $\mathcal{P}^-_N$, where $N$ is a non-negative integer,  which correspond to the cases $k_1=0$, $k_2=0$, $0=k_1\leq k_2=N$, and $-N=k_1\leq k_2=0$ in \eqref{calP}, respectively. So, $\mathcal{P}^+$ is the set of usual polynomials, and $\mathcal{P}^+_N$ is the set of polynomials of degree less than or equal to $N$.

The set of rational functions $\{f=p/q: p,q\in\calP\}$ is denoted by $\calR$, and $\calR_+$ (resp. $\calR_-$) stands for the rational functions with the poles in $\bT_-\cup\bT$ (resp. in $\bT_+$). We assume that functions from $\calR_-$ vanish at $\infty$ and constant functions belong to $\calR_+$, so that $\calR=\calR_+\oplus\calR_-$, i.e., every $f\in\calR$ can be uniquely decomposed as
\begin{equation}\label{Rdec}
  f=f^-+f^+,
\end{equation}
 where $f^-\in\calR_-$ and $f^+\in\calR_+$.

For $f\in\calR$, it is assumed that $\wdt{f}(z)=\ol{f(1/\ol{z})}=\ol{f(z^\star)}$, and for $F=[F_{ij}]\in(\calR)^{m\tm n}$ it is assumed that
$\wdt{F}=[\wdt{F_{ji}}]\in(\calR)^{n\tm m}$. Note that $\wdt{f}(z)=\ol{f(z)}$ and $\wdt{F}(z)=F^*(z)$ for $z\in\bT$.

Of course,
\begin{equation}\label{Rel_Con}
  f,\wdt{f}\in\calR_+  \text{ and } f \text{ is free of poles on }\bT  \Longrightarrow f=\const.
\end{equation}

A matrix function $U\in \calR^{m\tm m}$ is called paraunitary if
$$
U(z)\wdt{U}(z)=I_m
$$
(when we write an equation involving rational functions, we assume it holds wherever the rational functions are defined, which is everywhere except for their poles). Note that $U\in\calR^{m\tm m}$ is paraunitary if and only if $U(z)$ is unitary (i.e., $U(z)U^*(z)=I_m$) for each $z\in\bT$.

A matrix function $S\in \calR^{m\tm m}$ is called positive definite, if $S(z)\in\bC^{m\tm m}$ is positive definite for each $z\in\bT$, except for some isolated points (where the determinant of $S$ might be equal to zero, or some entries of $S$ might have a pole).

If $f$ is an analytic function in a neighborhood of $a\in\bC$, then the $k$-th coefficient of its Taylor series expansion is denoted by $c_k^+\{f,a\}$.

The notation $\langle \cdot,\cdot\rangle_{\bC^m}$ and $\|\cdot\|_{\bC^m}$ are for the standard scalar product and norm on $\bC^m$.
The symbol $\dt_{ij}$ stands for the Kronecker delta, i.e., $\dt_{ij}=1$ if $i=j$ and $\dt_{ij}=0$ otherwise, and $e_j=(\dt_{1j},\dt_{2j},\ldots,\dt_{mj})^T $ is a standard basis vector of $\bC^{m\tm 1} $.

Throughout this paper, we understand the term 'construct' as 'finding the object exactly,' with its precise meaning emerging from the surrounding context. For instance, 'constructing' $p\in \calP$ amounts to finding its coefficients exactly, while 'constructing' $f\in\calR$ means determining coprime $p$ and $q$ such that $f=p/q$. In theory, we can also find exact solutions for certain problems, such as determining Laurent series coefficients or solving linear equations with known matrices and vectors. These facts will be employed implicitly in the subsequent sections.

\section{Preliminary observations } \label{Sect3}

\subsection*{3.1}
We will need  the following simple observation. Let
\begin{equation}\label{nSC}
  \begin{cases}
    a_{11}z_1+b_{11}\ol{z_1}+a_{12}z_2+b_{12}\ol{z_2}+\ldots +a_{1n}z_n+b_{1n}\ol{z_n}=c_1\\
     a_{21}z_1+b_{21}\ol{z_1}+a_{22}z_2+b_{22}\ol{z_2}+\ldots +a_{2n}z_n+b_{2n}\ol{z_n}=c_2\\
     \vdots\\
      a_{n1}z_1+b_{n1}\ol{z_1}+a_{n2}z_2+b_{n2}\ol{z_2}+\ldots +a_{nn}z_n+b_{nn}\ol{z_n}=c_n
  \end{cases}
\end{equation}
be the system of $n$ equations with unknowns $z_1, z_2,\ldots,z_n$. It is equivalent to the following $2n\tm 2n$ system of equations with the unknowns $x_i=\Re(z_i)$ and $y_i=\Im(z_i)$, $i=1,2,\ldots,n$,
\begin{equation}\label{2nSR}
  \begin{cases}
    (a^r_{11}+b^r_{11})x_1+ (b^i_{11}-a^i_{11})y_1+\ldots+ (a^r_{1n}+b^r_{1n})x_n+ (b^i_{1n}-a^i_{1n})y_n=c^r_1\\
    \vdots\\
    (a^r_{n1}+b^r_{n1})x_1+ (b^i_{n1}-a^i_{n1})y_1+\ldots+ (a^r_{nn}+b^r_{nn})x_n+ (b^i_{nn}-a^i_{nn})y_n=c^r_n\\
    (a^i_{11}+b^i_{11})x_1+ (a^r_{11}-b^r_{11})y_1+\ldots+ (a^i_{1n}+b^i_{1n})x_n+ (a^r_{1n}-b^r_{1n})y_n=c^i_1\\
    \vdots\\
    (a^i_{n1}+b^i_{n1})x_1+ (a^r_{n1}-b^r_{n1})y_1+\ldots+ (a^i_{nn}+b^i_{nn})x_n+ (a^r_{nn}-b^r_{nn})y_n=c^i_n
  \end{cases},
\end{equation}
where $a^r=\Re(a)$, $a^i=\Im(a)$, and the same for $b$ and $c$.
\begin{remark}\label{SC}
  If system \eqref{nSC}  has a unique solution, then system \eqref{2nSR} has a unique solution as well and, therefore, the determinant of its $2n\tm 2n$ coefficients matrix  is nonzero.
\end{remark}

\subsection*{3.2}
If a matrix polynomial $S\in\calP_{\{-N,N\}}^{m\tm m}$ is positive definite, then spectral factorization \eqref{SF1} has the form
\begin{equation*}
  S(z)=S_+(z)\wdt{S_+}(z),\;\;z\in\bC\backslash\{0\},
\end{equation*}
where $S_+\in \calP_{\{0,N\}}^{m\tm m}$.  Under the usual requirement that the spectral factor $S_+$ is nonsingular on $\bT_+$, the spectral factor is unique up to a constant right unitary multiple. This theorem is known as the Polynomial Matrix Spectral Factorization Theorem, and its elementary proof is available in   \cite{Eph14}.

Since every positive definite matrix function $R\in\calR^{m\tm m}$ can be represented as a ratio $R=S/P$, where $S\in\calP^{m\tm m}$ and $P\in \calP$ are positive definite, the spectral factorization theorem (alongside with the uniqueness) can be extended to the rational case:
\begin{theorem}\label{RSFTH}
  If $R\in\calR^{m\tm m}$ is positive definite, then there exists an unique $($up to a constant right unitary multiple$)$ $R_+\in\calR_+^{m\tm m}$ such that $\det R_+(z)\not=0$ for each $z\in\bT_+$ and
\begin{equation}\label{Rsf}
  R(z)=R_+(z)\wdt{R_+}(z)\;\;
\end{equation}
for each $z$ where both sides of \eqref{Rsf} are defined.
\end{theorem}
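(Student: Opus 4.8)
The theorem to prove: If $R\in\calR^{m\tm m}$ is positive definite, then there exists a unique (up to constant right unitary multiple) $R_+\in\calR_+^{m\tm m}$ with $\det R_+(z)\neq 0$ on $\mathbb{T}_+$ satisfying $R = R_+\wdt{R_+}$.

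The key reduction is already hinted at in the text: every positive definite $R\in\calR^{m\tm m}$ can be written as $R = S/P$ where $S\in\calP^{m\tm m}$ and $P\in\calP$ are both positive definite.

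Let me sketch the approach.

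The plan is to **reduce the rational case to the already-established polynomial matrix spectral factorization theorem** (stated in §3.2, proved in [Eph14]). The strategy has three movements: (1) reduce to a polynomial problem by clearing a scalar denominator; (2) apply polynomial factorization to both numerator and denominator; (3) combine and verify the required analyticity/nonsingularity properties.

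Here's how I'd carry it out.

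First, I would establish the reduction $R = S/P$ rigorously. Since $R$ is positive definite on $\mathbb{T}$ (a.e.), each entry $R_{ij}$ is rational. Let $P\in\calP$ be a common denominator of all entries — but I need $P$ to be a *positive definite* scalar Laurent polynomial. The natural choice: take $q\in\calP$ to be a common denominator of the entries of $R$, then set $P = q\wdt{q}$, which is a positive definite scalar Laurent polynomial (it's $\geq 0$ on $\mathbb{T}$ since $P(t) = |q(t)|^2$). Then $S = PR$ has polynomial entries, and $S = PR$ inherits positive definiteness from $R$ and positivity of $P$. So $S\in\calP^{m\tm m}$ is positive definite.

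Second, I would apply the Polynomial Matrix Spectral Factorization Theorem (§3.2) to get $S = S_+\wdt{S_+}$ with $S_+\in\calP^{m\tm m}_{\{0,N\}}$ nonsingular on $\mathbb{T}_+$, and the scalar version to get $P = p_+\wdt{p_+}$ with $p_+\in\calP^+$ having no zeros in $\mathbb{T}_+$ (scalar spectral factorization, which exists by the explicit formula or as a degenerate case of the matrix theorem). Then set
$$R_+ := \frac{1}{p_+}\,S_+.$$

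Third, I would verify that $R_+$ has all the required properties. We compute
$$R_+\wdt{R_+} = \frac{1}{p_+}S_+ \cdot \wdt{S_+}\,\frac{1}{\wdt{p_+}} = \frac{S_+\wdt{S_+}}{p_+\wdt{p_+}} = \frac{S}{P} = R,$$
using that scalar factors commute. For the pole structure: $S_+$ is a polynomial matrix, and $p_+$ has zeros only in $\mathbb{T}_-\cup\mathbb{T}$, so $1/p_+\in\calR_+$, whence $R_+\in\calR_+^{m\tm m}$. For nonsingularity on $\mathbb{T}_+$: $\det R_+ = (\det S_+)/p_+^m$; the numerator $\det S_+$ is nonzero on $\mathbb{T}_+$ by the polynomial theorem, and $p_+$ is also nonzero on $\mathbb{T}_+ = \mathbb{D}$ by construction — so $\det R_+\neq 0$ there.

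**The main obstacle** I anticipate is *uniqueness*, not existence. Existence is a fairly mechanical assembly once the reduction is in place. For uniqueness, suppose $R = R_+\wdt{R_+} = G_+\wdt{G_+}$ with both factors in $\calR_+^{m\tm m}$ and nonsingular on $\mathbb{T}_+$. I would set $U := R_+^{-1}G_+$ and show $U$ is a *constant* unitary. The key computation: from the two factorizations, $U\wdt{U} = R_+^{-1}G_+\wdt{G_+}\wdt{R_+}^{-1} = R_+^{-1}R\wdt{R_+}^{-1} = R_+^{-1}(R_+\wdt{R_+})\wdt{R_+}^{-1} = I_m$, so $U$ is paraunitary. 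Now I need to argue $U$ and $\wdt{U} = U^{-1}$ are *both* in $\calR_+$ with no poles on $\mathbb{T}$, which would force $U = \const$ by the principle in (3.12) [the displayed implication $f,\wdt{f}\in\calR_+$, $f$ pole-free on $\mathbb{T}$ $\Rightarrow f=\const$]. The subtlety is controlling the poles: $U = R_+^{-1}G_+$ where $R_+^{-1}$ is potentially problematic. But $\det R_+\neq 0$ on $\mathbb{T}_+$ means $R_+^{-1}\in\calR_+^{m\tm m}$ restricted to $\mathbb{T}_+$, so $U\in\calR_+$ there; symmetrically $U^{-1} = G_+^{-1}R_+\in\calR_+$; then $U$ unitary on $\mathbb{T}$ (hence bounded, no poles on $\mathbb{T}$) plus both $U,\wdt{U}\in\calR_+$ gives entrywise constancy. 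This pole-tracking argument, matching the scalar and matrix analyticity domains carefully across $\mathbb{T}$, is where the real care is needed.

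One caveat worth flagging: I should double-check that the reduction $R = S/P$ truly preserves positive definiteness in the sense defined (positive definite a.e. on $\mathbb{T}$, with isolated exceptions allowed), and that the degenerate points where $R$ loses definiteness or has poles are exactly accounted for by the zeros of $P$ and $\det S$ — this bookkeeping ensures the "for each $z$ where both sides are defined" clause in the conclusion is honestly satisfied.
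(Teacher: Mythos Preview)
Your proposal is correct and follows exactly the route the paper indicates: the paper does not give a detailed proof of Theorem~\ref{RSFTH} but merely states, in the sentence preceding it, that the rational case follows from the Polynomial Matrix Spectral Factorization Theorem via the representation $R=S/P$ with $S\in\calP^{m\tm m}$ and $P\in\calP$ positive definite. You have carried out precisely this reduction, and in fact supplied more detail than the paper does (the explicit choice $P=q\wdt{q}$, the verification that $R_+:=S_+/p_+$ lies in $\calR_+^{m\tm m}$ and is nonsingular on $\bT_+$, and the uniqueness argument via the paraunitary quotient $U=R_+^{-1}G_+$ together with relation~\eqref{Rel_Con}).
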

\begin{remark}\label{RemPO1}
  To be specific, if $R_+$ and $Q_+$ are two spectral factors of $R$, then there exists a unitary matrix $U\in\bC^{m\tm m}$ such that $R_+(z)=Q_+(z)U$.
\end{remark}
\begin{remark}\label{RemPO2}
Factorization \eqref{Rsf} provides also the spectral factorization of the determinant
\begin{equation*}
  \det R(z)=\det R_+(z)\wdt{\det R_+}(z).
\end{equation*}
\end{remark}

\subsection*{3.3}
Knowing the coefficients $f_{1k}$ of the expansion of an analytic function $f$ in a neighborhood of $a\in\bC$,
\begin{equation*}
  f(z)=\sum\nolimits_{k=0}^\iy f_{1k}(z-a)^k,\;\;f_{1k}\in\bC,\;\;k=0,1,\ldots,
\end{equation*}
we can (explicitly) compute the coefficients of the expansion of its $l$-th power, $f^l$:
\begin{equation}\label{fl}
  \left[f(z) \right]^l=\sum\nolimits_{k=0}^\iy f_{lk}(z-a)^k,\;\;f_{lk}\in\bC, \;\;k=0,1,\ldots,
\end{equation}
in the same neighborhood for each $l\geq 1$ by the following recursive formula
\begin{equation}\label{flr}
  f_{l+1,k}=\sum\nolimits_{j=0}^{k}f_{l,k-j}f_{1j}\,.
\end{equation}
For the sake of notational convenience, we also assume that $f_{0k}=\dt_{0k}$ for $k=0,1,2,\ldots$.

If $b\in\bT_+$ and a function $\wdt{u}\in\calR_-$ has the form
$$
\wdt{u}(z)=\sum\nolimits_{l=0}^N\frac{c_l}{(z-b)^l}\,,
$$
then
\begin{equation}\label{vdt}
  u(z)=\sum\nolimits_{l=0}^{N}\frac{\ol{c_{l}}}{(1/z-\ol{b})^l}=\sum\nolimits_{l=0}^{N}\frac{\ol{c_{l}}z^l}{(1-\ol{b}z)^l}\,.
\end{equation}

If now $a\in \bT_+$ ($a=b$ is not excluded), we have the expansion
$$
\frac{1}{1-\ol{b}z}=\sum\nolimits_{k=0}^\iy \frac{b^\star}{(b^\star-a)^{k+1}}(z-a)^k
$$
in a neighborhood of $a$ (to be specific, for $|z-a|<|b^\star-a|$) and hence (using $z=z-a+a$)
$$
f(z):=\frac{z}{1-\ol{b}z}=\frac{ab^\star}{b^\star-a}+\sum_{k=1}^\iy \frac{b^\star+b^\star a(b^\star-a)}{(b^\star-a)^{k+1}}(z-a)^k=:\sum_{k=0}^\iy f_{1k}(z-a)^k.
$$
Applying now formulas \eqref{fl}, \eqref{flr}, we can expand \eqref{vdt} in the same neighborhood of $a$ as
$$
u(z)=\sum_{l=0}^{N}\ol{c_{l}}\left(\frac{z}{1-\ol{b}z}\right)^l
=:\sum_{l=0}^{N}\ol{c_{l}}\sum_{k=0}^\iy f_{lk}^{ab}(z-a)^k
=\sum_{k=0}^\iy\left(\sum_{l=0}^{N}f_{lk}^{ab}\ol{c_{l}}\right)(z-a)^k,
$$
where the coefficients $f_{lk}^{ab}$, which depend on $a$ and $b$, can be recursively computed for each $l=1,2,\ldots,N$ and $k=0,1,\ldots$

Thus, in order to compute the first $L$ coefficients $c^+_0, c^+_1,\ldots,c^+_{L-1}$ of the expansion of the function \eqref{vdt} in the neighborhood of $a$, one can use the linear transformation
\begin{equation}\label{cAc}
  (c^+_0, c^+_1,\ldots,c^+_{L-1})^T=A^{ab}_{LN}(\ol{c_1},\ol{c_2},\ldots,\ol{c_N})^T,
\end{equation}
where $A^{ab}_{LN}$ is an $L\tm  N$ matrix whose $kl$-th entry is equal to
$$
[A^{ab}_{LN}]_{kl}=f_{lk}^{ab},\;\; 0\leq k< L, \;1\leq l\leq N.
$$
We emphasize that the entries of $A^{ab}_{LN}$ depend only on $a$, $b$, $L$, and $N$.

\section{Some auxiliary lemmas}\label{Sect4}
For given rational functions $\phi_j\in\calR_-$, $j=1,2,\ldots,m-1$, consider the following system of conditions (cf. \cite[eq. (31)]{EL2014})
\begin{equation} \label{7.1}
\begin{cases}
\phi_1x_m-\widetilde{x_1}\in \mathcal{R}_+,\\
\phi_2x_m-\widetilde{x_2}\in \mathcal{R}_+,\\
\vdots\\
\phi_{m-1}x_m-\widetilde{x_{m-1}}\in \mathcal{R}_+,\\
\phi_1x_1+\phi_2x_2+\ldots+\phi_{m-1}x_{m-1}
+\widetilde{x_m}\in \mathcal{R}_+.
\end{cases}
\end{equation}
We say that a vector function $X=(x_1, x_2,\ldots,x_m)^T\in(\calR_+)^{m\tm 1}$ is a solution of \eqref{7.1} if its coordinate functions {\em are free of the poles on $\bT$} and satisfy the conditions in \eqref{7.1}.

\begin{lemma}\label{L1}
	$($cf. \cite[Lemma 3]{EL2014}$)$ If $X$ and $Y$ are two solutions of \eqref{7.1} $($not necessarily different$)$, then
\begin{equation}\label{7.2}
\sum_{k=1}^{m-1}x_k\widetilde{y_k}+\widetilde{x_m}y_m=\const.
\end{equation}	
\end{lemma}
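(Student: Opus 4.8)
The plan is to show that the left-hand side of \eqref{7.2},
\[
E:=\sum_{k=1}^{m-1}x_k\widetilde{y_k}+\widetilde{x_m}y_m,
\]
satisfies $E\in\calR_+$ together with $\widetilde{E}\in\calR_+$, and is free of poles on $\bT$; then \eqref{Rel_Con} forces $E=\const$, which is exactly the assertion.

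First I would use the system \eqref{7.1} to rewrite $E$. Applying the first $m-1$ conditions to the solution $Y$, set $r_k:=\phi_k y_m-\widetilde{y_k}\in\calR_+$, so that $\widetilde{y_k}=\phi_k y_m-r_k$ for $k=1,\dots,m-1$. Substituting into the first sum gives
\[
\sum_{k=1}^{m-1}x_k\widetilde{y_k}=y_m\sum_{k=1}^{m-1}\phi_k x_k-\sum_{k=1}^{m-1}x_k r_k.
\]
Next, applying the last condition of \eqref{7.1} to $X$, set $s:=\sum_{k=1}^{m-1}\phi_k x_k+\widetilde{x_m}\in\calR_+$, i.e. $\sum_{k=1}^{m-1}\phi_k x_k=s-\widetilde{x_m}$. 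Inserting this, the term $-y_m\widetilde{x_m}$ cancels against $\widetilde{x_m}y_m$ and one is left with
\[
E=s\,y_m-\sum_{k=1}^{m-1}x_k r_k.
\]
Since $s,y_m,x_k,r_k\in\calR_+$ and $\calR_+$ is closed under addition and multiplication, this shows $E\in\calR_+$.

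The second ingredient is that $\widetilde{E}\in\calR_+$ as well. Using $\widetilde{\,\widetilde{f}\,}=f$ and $\widetilde{fg}=\widetilde{f}\,\widetilde{g}$, one computes $\widetilde{E}=\sum_{k=1}^{m-1}\widetilde{x_k}y_k+x_m\widetilde{y_m}$, which is precisely the expression $E$ with the roles of $X$ and $Y$ interchanged. Since $X$ and $Y$ are both solutions of \eqref{7.1}, repeating the computation above with $X$ and $Y$ swapped yields $\widetilde{E}\in\calR_+$ by the identical argument. Finally, because $X$ and $Y$ are solutions, each coordinate $x_k,y_k$ is free of poles on $\bT$; the involution $f\mapsto\widetilde{f}$ carries poles off $\bT$ to poles off $\bT$ (a pole at $p\notin\bT$ of $f$ becomes a pole at $1/\ol{p}\notin\bT$ of $\widetilde{f}$), so each $\widetilde{x_k},\widetilde{y_k}$, and hence $E$ itself, is free of poles on $\bT$. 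Thus $E\in\calR_+$, $\widetilde{E}\in\calR_+$, and $E$ has no poles on $\bT$, so \eqref{Rel_Con} gives $E=\const$.

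I do not expect a serious obstacle here: the crux is the algebraic rearrangement showing that the two ``halves'' of the system \eqref{7.1} (the first $m-1$ rows of $Y$ paired with the last row of $X$) combine so that the cross terms land in $\calR_+$ and the mixed term cancels, together with the symmetric bookkeeping for $\widetilde{E}$. The only point requiring care is the free-of-poles-on-$\bT$ claim, which is where the defining requirement of a \emph{solution} — that the coordinate functions have no poles on $\bT$ — is genuinely used, since without it \eqref{Rel_Con} cannot be invoked.
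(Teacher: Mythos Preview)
Your proof is correct and follows essentially the same route as the paper: combine the first $m-1$ conditions of \eqref{7.1} applied to $Y$ with the last condition applied to $X$ so that the $\phi_k$-terms cancel and what remains lies in $\calR_+$, then invoke symmetry in $X,Y$ to get $\widetilde{E}\in\calR_+$, and conclude via \eqref{Rel_Con}. Your version names the $\calR_+$-residues $r_k,s$ and spells out the pole-on-$\bT$ check a bit more explicitly, but the argument is the same.
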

\begin{proof}
	Since both $X$ and $Y$ are solutions, we have in particular:
$$
\begin{cases} \phi_1y_m-\widetilde{y_1}\in \mathcal{R}_+,\\
\vdots\\
\phi_{m-1}y_m-\widetilde{y_{m-1}}\in \mathcal{R}_+,\\
\phi_1x_1+\phi_2x_2+\ldots+\phi_{m-1}x_{m-1}
+\widetilde{x_m}\in \mathcal{R}_+.
\end{cases}
$$
Taking the linear combination of these conditions with the weights $-x_1,\ldots,-x_{m-1}$ and $y_m$, respectively:
$$
\sum_{k=1}^{m-1}x_k\widetilde{y_k}+\widetilde{x_m}y_m\in\calR_+.
$$
Since the conditions on $X$ and $Y$ are symmetric, we also have
$$
\sum_{k=1}^{m-1}y_k\widetilde{x_k}+\widetilde{y_m}x_m\in\calR_+,
$$
and since the functions $x_i$ and $y_i$ do not have poles on $\bT$ by definition, the relation \eqref{Rel_Con} imply \eqref{7.2}.
\end{proof}

The set of solutions $\calS_m$ of \eqref{7.1} is not a linear space. In order to make it linear, we need to modify it and consider
$$
\calS_{\wdt{m}}:=  \{(x_1,x_2,\ldots,x_{m-1},\wdt{x_m})^T : (x_1,x_2,\ldots,x_{m-1},{x_m})^T\in \calS_m\}.
$$
Then $ \calS_{\wdt{m}}$ becomes a linear space in the usual sense: $X$, $Y\in \calS_{\wdt{m}}$ $\Rightarrow$ $\al X+\bt Y\in \calS_{\wdt{m}}$ for each $\al, \bt\in\bC$, where $\al (x_1,\ldots,x_{m-1},\wdt{x_m})^T= (\al x_1,\ldots,\al x_{m-1},\al\wdt{x_m})^T$ (not $\ol{\al}\wdt{x_m}$ in the last position). From now on, slightly abusing the notation, we may also call $ \calS_{\wdt{m}}= \calS_{\wdt{m}}(\phi_1,\phi_2,\ldots,\phi_{m-1})$ the space of the solutions of \eqref{7.1} (along with $\calS_m$) and denote  its elements $(x_1,\ldots,x_{m-1},\wdt{x_m})^T$ by $\hat{X}$.

Since $\wdt{x}(z)=\ol{x(z)}$ for each $z\in\bT$, Lemma \ref{L1} implies the following
\begin{corollary}\label{Cor4.1}
	If $\hat{X}$ and $\hat{Y}$ are two solutions of \eqref{7.1}, then $\langle \hat{X}(z),\hat{Y}(z)\rangle_{\bC^m}$ is constant on $\bT$. In particular, $\|\hat{X}(z)\|_{\bC^m}$ is constant on $\bT$ and if $\hat{X}(z)=0$ for some $z\in\bT$, then $\hat{X}\equiv 0$ and $X\equiv 0$.
\end{corollary}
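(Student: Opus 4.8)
The plan is to deduce the statement directly from Lemma \ref{L1}, the only real work being to translate the Hermitian inner product on $\bC^m$ into the tilde language in which \eqref{7.2} is phrased. The single fact I would rely on is that for any $f\in\calR$ one has $\wdt{f}(z)=\ol{f(z)}$ on $\bT$, together with its immediate consequence $\wdt{\wdt{f}}=f$ (which follows from $\wdt{f}(z)=\ol{f(1/\ol{z})}$). This lets me convert every complex conjugate appearing in $\langle\cdot,\cdot\rangle_{\bC^m}$ into a tilde, at the cost of restricting to $z\in\bT$ — which is exactly the set on which the conclusion is asserted.

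Concretely, writing $\hat{X}=(x_1,\ldots,x_{m-1},\wdt{x_m})^T$ and $\hat{Y}=(y_1,\ldots,y_{m-1},\wdt{y_m})^T$, for $z\in\bT$ I would expand
\begin{equation*}
\langle \hat{X}(z),\hat{Y}(z)\rangle_{\bC^m}
=\sum_{k=1}^{m-1}x_k(z)\,\ol{y_k(z)}+\wdt{x_m}(z)\,\ol{\wdt{y_m}(z)}.
\end{equation*}
On $\bT$ we may replace $\ol{y_k(z)}=\wdt{y_k}(z)$ and $\ol{\wdt{y_m}(z)}=\wdt{\wdt{y_m}}(z)=y_m(z)$, so the right-hand side becomes precisely $\sum_{k=1}^{m-1}x_k\wdt{y_k}+\wdt{x_m}y_m$, the left-hand side of \eqref{7.2}. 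By Lemma \ref{L1} this equals a constant, and since the two sides agree for every $z\in\bT$, the inner product $\langle \hat{X}(z),\hat{Y}(z)\rangle_{\bC^m}$ is constant on $\bT$, proving the first assertion.

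For the ``in particular'' part I would specialize to $\hat{Y}=\hat{X}$, giving that $\|\hat{X}(z)\|_{\bC^m}^2=\langle\hat{X}(z),\hat{X}(z)\rangle_{\bC^m}$ is constant on $\bT$, hence so is $\|\hat{X}(z)\|_{\bC^m}$. If $\hat{X}(z_0)=0$ for some $z_0\in\bT$, then this constant value is $0$, so $\hat{X}$ vanishes on all of $\bT$; as the coordinates of $\hat{X}$ are rational functions vanishing on the infinite set $\bT$, they vanish identically, i.e.\ $\hat{X}\equiv0$. In particular $x_1=\cdots=x_{m-1}=0$ and $\wdt{x_m}\equiv0$, whence $x_m=\wdt{\wdt{x_m}}\equiv0$ and $X\equiv0$. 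I do not anticipate a genuine obstacle here: the entire argument is a substitution into Lemma \ref{L1}, and the only point demanding care is the conjugation-versus-tilde bookkeeping on $\bT$, in particular the identity $\ol{\wdt{y_m}}=y_m$ that rests on $\wdt{\wdt{f}}=f$.
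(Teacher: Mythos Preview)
Your proof is correct and follows exactly the approach the paper takes: the paper simply states that Corollary~\ref{Cor4.1} follows from Lemma~\ref{L1} via the identity $\wdt{x}(z)=\ol{x(z)}$ on $\bT$, and your argument just spells out this derivation in detail.
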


\begin{corollary}\label{Cor4.2}
	Let $\hat{X}_1,\hat{X}_2,\ldots,\hat{X}_m$ be $m$ solutions of the system \eqref{7.1} such that
	\begin{equation}\label{9.1}
	\hat{X}_i(1)=(\dt_{i1},\dt_{i2},\ldots,\dt_{im})^T,\;\;\;i=1,2,\ldots,m,
	\end{equation}
and let
$$
C=(c_1,c_2,\ldots,c_m)^T\in\bC^{m\tm 1}.
$$
Then
$$
\hat{X}_C=\sum\nolimits_{i=1}^m c_i\hat{X}_i
$$
is the unique solution of the system \eqref{7.1} for which $ \hat{X}(1)=C.$
\end{corollary}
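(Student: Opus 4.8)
The plan is to check three things about $\hat{X}_C=\sum_{i=1}^m c_i\hat{X}_i$: that it is a solution of \eqref{7.1}, that it satisfies $\hat{X}_C(1)=C$, and that it is the only solution with this value at $1$. The first two are immediate from linearity and the normalization \eqref{9.1}; the uniqueness is where Corollary \ref{Cor4.1} does the essential work.

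First I would note that $\hat{X}_C$ lies in $\calS_{\wdt{m}}$ by linearity: each $\hat{X}_i$ is a solution and $\calS_{\wdt{m}}$ is closed under complex linear combinations, as established just above the corollary. Hence $\hat{X}_C$ is a solution of \eqref{7.1}. Evaluating at $z=1$ and invoking \eqref{9.1}, I would then compute
\begin{equation*}
\hat{X}_C(1)=\sum_{i=1}^m c_i\hat{X}_i(1)=\sum_{i=1}^m c_i(\dt_{i1},\dt_{i2},\ldots,\dt_{im})^T=(c_1,c_2,\ldots,c_m)^T=C,
\end{equation*}
so $\hat{X}_C$ takes the prescribed value at $1$.

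For uniqueness, I would let $\hat{Y}\in\calS_{\wdt{m}}$ be any solution with $\hat{Y}(1)=C$ and set $\hat{Z}:=\hat{Y}-\hat{X}_C$. By linearity $\hat{Z}$ is again a solution, and $\hat{Z}(1)=C-C=0$. The only point requiring any care is the recognition that $1\in\bT$, so that the last assertion of Corollary \ref{Cor4.1}---a solution vanishing at a point of $\bT$ must vanish identically---applies and forces $\hat{Z}\equiv 0$, i.e. $\hat{Y}=\hat{X}_C$. With that observation the corollary follows with no further computation.
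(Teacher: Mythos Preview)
Your proof is correct and is exactly the argument the paper intends: the corollary is stated without proof in the paper, and the natural (and only reasonable) route is precisely the one you take---use the linearity of $\calS_{\wdt{m}}$ for existence and the evaluation at $z=1$, then invoke Corollary~\ref{Cor4.1} (vanishing at a point of $\bT$ forces the solution to be identically zero) for uniqueness.
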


\section{Constructive proof of Theorem 1.1} \label{Sect5}

For $F$ defined by \eqref{2.1}, consider the positive definite matrix function
\begin{equation}\label{RF}
R(z)=F(z)\wdt{F}(z).
\end{equation}
Due to spectral factorization theorem for rational matrix functions (see Theorem \ref{RSFTH} and Remark \ref{RemPO1}), there exists the spectral factorization \eqref{Rsf} of \eqref{RF} such that
\begin{equation}\label{R1F1}
R_+(1)=F(1).
\end{equation}
\begin{remark}\label{Th1Rem1}
We emphasize that such factor $R_+(z)$ is unique \rm{(see Remark \ref{RemPO1}).}
\end{remark}

Since $\det R(z)=1=\det F(z)$ and spectral factorization yields the factorization of the determinant as well (see Remark \ref{RemPO2}), we have
$$
\det R_+(z)=\const
$$
and, due to \eqref{R1F1}, this constant is equal to 1. Therefore,
\begin{equation}\label{R=1}
  \det R_+(z)=1.
\end{equation}
Consequently, the matrix function
\begin{equation}\label{UFR}
  U(z)=F^{-1}(z)R_+(z),
\end{equation}
which is paraunitary since
$$
U(z)\wdt{U}(z)=F^{-1}(z)R_+(z)\wdt{R_+}(z)\wdt{F^{-1}}(z)=
F^{-1}(z)F(z)\wdt{F}(z)\wdt{F}^{-1}(z)=I_m,
$$
has the determinant equal to 1,
$$
\det U(z)=1.
$$
Let $U(z)=[U_{ij}(z)]_{i,j=1}^m$, and investigate its further properties.

If we write the inverse of the matrix \eqref{2.1} explicitly as
\begin{equation}\label{F-1}
F^{-1}(z)=\begin{pmatrix}1&0&0&\cdots&0&0\\
0&1&0&\cdots&0&0\\
0&0&1&\cdots&0&0\\
\vdots&\vdots&\vdots&\vdots&\vdots&\vdots\\
0&0&0&\cdots&1&0\\
-\phi_{1}(z)&-\phi_{2}(z)&-\phi_{3}(z)&\cdots&-\phi_{m-1}(z)&1
\end{pmatrix},
\end{equation}
it follows from \eqref{UFR} that
$$
U_{ij}\in\calR_+ \text{ for } 1\leq i<m, 1\leq j\leq m.
$$
Since
$$
\wdt{U}(z)=U^{-1}(z)=\frac{1}{\det U(z)}[\Cof(U(z))]^T=[\Cof(U(z))]^T,
$$
we have
$$
\wdt{U_{mj}}=\cof(U_{mj})\in\calR_+,\;\;\text{ for } 1\leq j\leq m,
$$
i.e., the entries of the last row of $U$ are in $\{u\in\calR: \wdt{u}\in\calR_+\}$ (note that since $U$ is a unitary matrix function on $\bT$, it does not have poles on $\bT$). Consequently, we can modify the notation for the entries of $U$ and assume that it has the form \eqref{2.5}, where \eqref{uij} holds.
Thus the matrix function \eqref{UFR} satisfies the conditions of Theorem \ref{Th1} and, taking into account that
$$
  U(1)=I_m
$$
(see \eqref{R1F1}) and such $U$ is unique (see Remark \ref{Th1Rem1}), we will construct it explicitly.

First observe that the columns of $U$ satisfy the system of conditions \eqref{7.1}. Indeed, since $R_+^{-1}\in\calR_+$ as a spectral factor and $U^{-1}=\wdt{U}$, it follows from \eqref{UFR} that
$$
\wdt{U}F^{-1}=R_+^{-1}\in\calR_+.
$$
Thus, if we write the product $\wdt{U}F^{-1}$ explicitly, taking into account equations \eqref{F-1} and
$$
\wdt{U}=\begin{pmatrix}\wdt{u_{11}}&\wdt{u_{21}}&\ldots&\wdt{u_{m-1,\,1}}&u_{m1}\\[2mm]
	\wdt{u_{12}}&\wdt{u_{22}}&\ldots&\wdt{u_{m-1,\,2}}&u_{m2}\\
	\vdots&\vdots &\vdots &\vdots &\vdots  \\
	\wdt{u_{1m}}&\wdt{u_{2m}}&\ldots&\wdt{u_{m-1,\,m}}&u_{mm}\\
\end{pmatrix},
$$
we obtain the first $m-1$ conditions in \eqref{7.1} for each column of $U$. The last condition directly follows from the relations $FU=R_+\in\calR_+$.

\begin{remark}\label{RemUsol}
We emphasize that the columns of $U$ are solutions of the system \eqref{7.1}.
\end{remark}

Suppose now that the functions $\phi_i\in\calR_-$ are of the form
\begin{equation*}
  \phi_i(z)=\sum_{k=1}^{n_i}\sum_{l=1}^{N_{ik}}\frac{\gm_{ikl}}{(z-a_{ik})^l}\,;\;\;\;i=1,2,\ldots,m-1,
\end{equation*}
i.e., $\phi_i$ has poles at $a_{i1}, a_{i2},\ldots, a_{i,n_i}$, $|a_{ik}|<1$, of orders $N_{i1}, N_{i2},\ldots, N_{i,n_i}$, respectively. Assume also that
$$
\cup_{i=1}^{m-1}\{a_{i1}, a_{i2},\ldots, a_{i,n_i}\}=:\{a_{m1}, a_{m2},\ldots, a_{m,n_m}\} \; \text{ and } \;
  N_{m\nu}:=\max\{N_{ik}:a_{ik}=a_{m\nu}\},
$$
 $1\leq\nu\leq n_m$,  i.e., we combine poles of all functions $\phi_i$, $i=1,2,\ldots,m-1$, and set their maximal order at each pole as the order of the pole.

For each $j=1,2,\ldots,m$, we construct the $j$-th column of $U$. To this end, assume that $j$ is fixed and let $u_i=u_{ij}$, $i=1,2,\ldots,m$.
Since functions $\wdt{u_i}$, $1\leq i\leq m$, have poles only in $\bT_+$ (see \eqref{uij}), it can be observed from the first $m-1$ equations of \eqref{7.1} that functions $u_i$, $1\leq i<m$, have the form
\begin{equation}\label{ui}
  \wdt{u_i}(z)=C_i+\sum_{k=1}^{n_i}\sum_{l=1}^{N_{ik}}\frac{C_{ikl}}{(z-a_{ik})^l}
\end{equation}
and it follows from the last relation of \eqref{7.1} that
\begin{equation}\label{um}
  \wdt{u_m}(z)=C_m+\sum_{k=1}^{n_m}\sum_{l=1}^{N_{mk}}\frac{C_{mkl}}{(z-a_{mk})^l}\,.
\end{equation}
The functions \eqref{ui}, for  $i=1,2,\ldots,m-1$, and \eqref{um} overall contain
\begin{equation}\label{mN}
m_0:=  m+\sum_{i=1}^{m}\sum_{k=1}^{n_i}N_{ik}
\end{equation}
unknown coefficients:
\begin{equation}\label{CiCikl}
  C_i,\; 1\leq i\leq m, \text{ and }\; C_{ikl},\; 1\leq i\leq m,\, 1\leq k\leq n_i,\, 1\leq l\leq N_{ik}.
\end{equation}

We will construct a linear algebraic system of equations (with these coefficients as unknowns) consisting of  the same number of equations.
Indeed, $m$ equations can be obtained from the relation that the $j$-th column of $U(1)$ is equal to $e_j$ (see \eqref{U1I}), namely we have
\begin{equation}\label{ui1}
   {u_i}(1)=\ol{C_i}+\sum_{k=1}^{n_i}\sum_{l=1}^{N_{ik}}\frac{\ol{C_{ikl}}}{(1-\ol{a_{ik}})^l}=\dt_{ij}\,,\text{ for } i=1,2,\ldots,m.
\end{equation}
In addition, for each function $\phi_i$, $i=1,2,\ldots,m-1$, considering $i$-th relation of the system \eqref{7.1} which is satisfied by $u_m$ and $u_i$,
\begin{equation}\label{phiix}
  \phi_i u_m-\widetilde{u_i}\in \mathcal{R}_+,
\end{equation}
and equating $N_{ik}$ negative indexed coefficients to $0$ in the Laurent expansion of the  function in \eqref{phiix} in a neighborhood of $a_{ik}$, where $1\leq k\leq n_i$, we get the following $N_{ik}$ equations written in the matrix form
\begin{equation}\label{GmcC}
\left[\begin{matrix}\gamma_{ik1}&\gamma_{ik2}&\gamma_{ik3}&\cdots&\gamma_{ik,N_{ik}-1}&\gamma_{ikN_{ik}}\\
\gamma_{ik2}&\gamma_{ik3}&\gamma_{ik4}&\cdots&\gamma_{ikN_{ik}}&0\\
\gamma_{ik3}&\gamma_{ik4}&\gamma_{ik5}&\cdots&0&0\\
\cdot&\cdot&\cdot&\cdots&\cdot&\cdot\\
\gamma_{ikN_{ik}}&0&0&\cdots&0&0\end{matrix}\right]\cdot
\left[\begin{matrix}c^+_0\{u_m,a_{ik}\}\\c^+_1\{u_m,a_{ik}\}\\ c^+_2\{u_m,a_{ik}\}\\ \vdots\\c^+_{N_{ik}-1}\{u_m,a_{ik}\}\end{matrix}\right]=
\left[\begin{matrix}C_{ik1}\\C_{ik2}\\ C_{ik3}\\ \vdots\\C_{ikN_{ik}}\end{matrix}\right].
\end{equation}
Using  equation \eqref{um} and the relations described by \eqref{cAc}, we can substitute
$$
(c^+_0\{u_m,a_{ik}\}, \cdots\\c^+_{N_{ik}-1}\{u_m,a_{ik}\})^T=\sum\nolimits_{\tau=1}^{n_m}A^{a_{ik}a_{m\tau}}_{N_{ik}N_{m\tau}}
(\ol{C_{m\tau1}}, \ol{C_{m\tau2}}, \cdots,\ol{C_{m\tau N_{m\tau}}})^T
$$
in \eqref{GmcC}.  The resulting system will contain only \eqref {CiCikl} as the unknowns.  Performing the same procedure for each pole $a_{ik}$ of $\phi_i$, we obtain $\sum\nolimits_{k=1}^{n_i}N_{ik}$ equations for each $i=1,2,\ldots,m-1$, and thus in total
\begin{equation}\label{ssN}
 m_1:= \sum\nolimits_{i=1}^{m-1}\sum\nolimits_{k=1}^{n_i}N_{ik}
\end{equation}
additional equations.

Consider now the solution of the last condition in \eqref{7.1}:
$$
\phi_1u_1+\phi_2u_2+\ldots+\phi_{m-1}u_{m-1}+\widetilde{u_m}\in \mathcal{R}_+\,.
$$
Equating again negative indexed coefficients to $0$ in the Laurent expansion of the above function  in a neighborhood of $a_{mk}$, for each $k=1,2,\ldots n_m$, we get the following $N_{mk}$ equations
\begin{equation}\label{GmcC2}
\sum_{i}
\left[\begin{matrix}\gamma_{i\nu1}&\gamma_{i\nu2}&\gamma_{i\nu3}&\cdots&\gamma_{i\nu N_{mk}-1}&\gamma_{i\nu N_{mk}}\\
\gamma_{i\nu2}&\gamma_{i\nu3}&\gamma_{i\nu4}&\cdots&\gamma_{i\nu N_{mk}}&0\\
\gamma_{i\nu3}&\gamma_{i\nu4}&\gamma_{i\nu5}&\cdots&0&0\\
\cdot&\cdot&\cdot&\cdots&\cdot&\cdot\\
\gamma_{i\nu N_{mk}}&0&0&\cdots&0&0\end{matrix}\right]
\left[\begin{matrix}c^+_0\{u_i,a_{mk}\}\\c^+_1\{u_i,a_{mk}\}\\ c^+_2\{u_i,a_{mk}\}\\ \vdots\\c^+_{N_{mk}-1}\{u_i,a_{mk}\}\end{matrix}\right]=
-\left[\begin{matrix}C_{mk1}\\C_{mk2}\\ C_{mk3}\\ \vdots\\C_{mkN_{mk}}\end{matrix}\right].
\end{equation}
Here, the summation is with respect to those $i$es for which $a_{i\nu}=a_{mk}$ for some $\nu\leq n_i$ and it is assumed that $\gamma_{i\nu l}=0$ if $N_{i\nu}<N_{mk}$ and $N_{i\nu}<l\leq N_{mk}$. Again, we can eliminate  extra unknowns in the above equations by making the substitutions (see
\eqref{ui} and \eqref{cAc})
$$
(c^+_0\{u_i,a_{mk}\},  \cdots\\c^+_{N_{mk}-1}\{u_i,a_{mk}\})^T=\sum\nolimits_{\tau=1}^{n_i}A^{a_{mk}a_{i\tau}}_{N_{mk}N_{i\tau}}
(\ol{C_{i\tau1}}, \ol{C_{i\tau2}}, \cdots,\ol{C_{i\tau N_{i\tau}}})^T.
$$
This way, we get
\begin{equation}\label{ssN2}
 m_2:= \sum\nolimits_{k=1}^{n_m}N_{mk}
\end{equation}
additional equations, and summing up $m$ (the number of equations in \eqref{ui1}) with $m_1$ in \eqref{ssN} and $m_2$ in \eqref{ssN2}, we get $m_0$ in \eqref{mN}.
Consequently, we can construct the system of $m_0$ algebraic  equations  with $m_0$ unknowns \eqref{CiCikl}.
Some of these unknowns enter in this equation with their conjugate like $\ol{C_i}$ or $\ol{C_{ikl}}$, however,
the existence and uniqueness of the solution to this  system is known beforehand
and, taking into account Remark \ref{SC}, these unknowns can be found explicitly by the standard way.

\section{Constructive proof of Theorem 1.2} \label{Sect6}
By applying Theorem \ref{Th1}, the proof of Theorem \ref{Th2} closely follows the approach outlined in previous publications \cite{IEEE2011, IEEE2018}. However, it is crucial to note that the obtained intermediate terms can be computed exactly, which is a central focus of this paper. Particularly, throughout this section, when we refer to ``constructing", we understand ``computing exactly".

Note that the triangular factor $M$ in \eqref{TrM}  can be constructed within the field of rational functions using a process similar to the Cholesky factorization algorithm for standard numerical matrices. The key difference is that, instead of extracting square roots as the standard algorithm requires, one performs scalar spectral factorization. It is assumed that  the poles indicated in the theorem can be precisely determined during the construction of such $M$. Observe also that the determinant of \eqref{TrM} can be analytically extended inside $\bT$ everywhere and
\begin{equation}\label{detM}
  \det M(z)\not=0 \;\;\text{ for each }z\in\bT_+
\end{equation}
since it is required that the diagonal functions $f_m$, $m=1,2,\ldots$, are spectral factors.

The spectral factor $S_+$ can be represented as the product
\begin{equation}\label{S3}
S_+(z)=M(z)\mathbf{U}_2(z)\mathbf{U}_3(z)\ldots\mathbf{U}_r(z),
\end{equation}
where each matrix $\mathbf{U}_m$ is paraunitary and has the following block matrix form
\begin{equation}\label{UB}
\mathbf{U}_m(z)=\begin{pmatrix}U_{m}(z)&0_{m\tm (r-m)}\\0_{(r-m)\tm m}&I_{r-m}\end{pmatrix},\;\;m=2,3,\ldots,r
\end{equation}
Matrices \eqref{UB} are constructed recursively in such a way that $[M_m]_{m\tm m}=:[S]_{m\tm m}^+$ is a spectral factor of $[S]_{m\tm m}$, where
\begin{equation*}\label{75}
M_m=M\mathbf{U}_2\mathbf{U}_3\ldots\mathbf{U}_m.
\end{equation*}
This can be achieved by using Theorem \ref{Th1}. Indeed, let us assume that $\mathbf{U}_2, \mathbf{U}_3, \ldots,\mathbf{U}_{m-1}$ are already constructed so that
\begin{equation}\label{SMMm1}
  [S(z)]_{(m-1)\tm (m-1)}=[M_{m-1}(z)]_{(m-1)\tm (m-1)}[\wdt{M_{m-1}}(z)]_{(m-1)\tm (m-1)}.
\end{equation}
Since matrices \eqref{UB} are paraunitary and they have the special structure, we also have
$$
[S(z)]_{m\tm m}=[M_{m-1}(z)]_{m\tm m}[\wdt{M_{m-1}}(z)]_{m\tm m}.
$$
Furthermore, the matrix $[M_{m-1}(z)]_{m\tm m}$ has the form
\begin{gather}
[M_{m-1}]_{m\tm m}=\left[\begin{matrix}& &  S_{(m-1)\tm (m-1)}^+(t)& & \begin{matrix}0_{(m-1)\tm 1}\end{matrix}\\
\zeta_1 & \zeta_2 & \ldots& \zeta_{m-1}& f_m^+
\end{matrix}\right] =
\left[\begin{matrix} S_{(m-1)\tm (m-1)}^+(t) & \begin{matrix}0_{(m-1)\tm 1}\end{matrix}\\
0_{1\tm(m-1)}& f_m^+ \end{matrix}\right]\notag\\   \times
\left[\begin{matrix}& &  I_{m-1}& & \begin{matrix}0_{(m-1)\tm 1}\end{matrix}\\
\zeta_1/f_m^+ & \zeta_2/f_m^+ & \ldots& \zeta_{m-1}/f_m^+& 1
\end{matrix}\right]=
\left[\begin{matrix} S_{(m-1)\tm (m-1)}^+(t) & \begin{matrix}0_{(m-1)\tm 1}\end{matrix}\\
0_{1\tm(m-1)}& f_m^+ \end{matrix}\right] \notag\\  \label{Mmm3}\times
\left[\begin{matrix}& &  I_{m-1}& & \begin{matrix}0_{(m-1)\tm 1}\end{matrix}\\
\phi^+_1 & \phi^+_2 & \ldots& \phi^+_{m-1}& 1
\end{matrix}\right] \left[\begin{matrix}& &  I_{m-1}& & \begin{matrix}0_{(m-1)\tm 1}\end{matrix}\\
\phi^-_1 & \phi^-_2 & \ldots& \phi^-_{m-1}& 1
\end{matrix}\right],
\end{gather}
where $$\zeta_i/f_m^+=\phi^+_i+\phi^-_i,\;\; i=1,2,\ldots,m-1,$$ is the decomposition of a rational function according to the rule \eqref{Rdec}.
The first two factors in the right hand side of  \eqref{Mmm3} belong to $(\calR_+)^{m\tm m}$ and their determinants are free of zeros in $\bT_+$. Assuming now that $F$ is the last matrix in \eqref{Mmm3} and applying Theorem \ref{Th1}, we can find a paraunitary matrix $U_m=U$ of the form \eqref{2.5}, satisfying \eqref{uij} and \eqref{2.4}, such that \eqref{FUR} holds. Hence,
\begin{equation}\label{Mm1Um}
  [M_{m-1}]_{m\tm m} U_m=[M_m]_{m\tm m}
\end{equation}
is a spectral factor of $[S]_{m\tm m}$, and equation \eqref{SMMm1} remains valid if we change $(m-1)$ to $m$. Note that, although the factors in \eqref{Mm1Um} are merely rational matrices, the product $[M_m]_{m\tm m}=[S]_{m\tm m}^+\in(\calP_+)^{m\tm m}$ due to polynomial spectral factorization theorem (see Section 3.2).

Thus, if we accordingly construct all the matrices $\mathbf{U}_2, \mathbf{U}_3,\ldots\mathbf{U}_r$ in \eqref{S3}, we obtain a spectral factor $S_+$.

\begin{remark}
  In this paper, our focus is on exact factorization. If we cannot precisely handle the factor described in Theorem \ref{Th2} as given by \eqref{TrM}, we can still obtain its entries and corresponding poles in $\mathbb{T}$ with any prescribed accuracy. This allows us to derive an approximation of $M$ by a rational matrix function, say $\hat{M}$. We can then proceed with $\hat{M}$ and apply the algorithm as previously described. This results in an approximate spectral factor $\hat{S}_+$. We anticipate that the convergence $\hat{S}_+\to S_+$ should occur under the condition that $\hat{M}\to M$, which will be the focus of our future work.

In our opinion, the ideas developed in this section can also be used to factorize matrices depending on a parameter.
\end{remark}

\section{Completion of paraunitary matrices} \label{Sect7}

In this section we prove Theorem \ref{Th3}. Since a matrix $V$ is paraunitary if and only if $V^T$ is paraunitary, for notational convenience, we assume the given data is the first column of the matrix and construct its completion. Hence we assume that
\begin{equation}\label{V1Com}
 V_1=(v_1, v_2,\ldots,\wdt{v_m})^T\in\calR^{m\tm 1}, \text{ where }v_i\in\calR_+ \text{ for } i=1,2,\ldots,m,
\end{equation}
is given which satisfies $\wdt{V_1}V_1=1$ and we obtain a paraunitary matrix $V$ with the first column $V_1$.
This completion follows the same idea as demonstrated in \cite[Section 5]{EL2014} for polynomial matrices: first we construct a rational row vector
\begin{equation} \label{phiebi}
(\phi_1,\phi_2\ldots,\phi_{m-1})\in\calR_-^{1\tm(m-1)}
\end{equation}
and then, applying the procedures described in the  proof of Theorem \ref{Th1}, we construct the corresponding completion. To this end, we consider the same system of conditions as \eqref{7.1},
\begin{equation} \label{7.12}
\begin{cases} \phi_1v_m-\widetilde{v_1}\in \mathcal{R}_+,\\
\phi_2v_m-\widetilde{v_2}\in \mathcal{R}_+,\\
\vdots\\
\phi_{m-1}v_m-\widetilde{v_{m-1}}\in \mathcal{R}_+,\\
\phi_1v_1+\phi_2v_2+\ldots+\phi_{m-1}v_{m-1}
+\widetilde{v_m}\in \mathcal{R}_+\,,
\end{cases}
\end{equation}
however, now we treat   $v_i\in\calR_+$ (for $1\leq i\leq m$) as known functions and $\phi_j\in\calR_-$ (for $1\leq j <m$) as unknown functions. A necessary and sufficient condition for the existence of this solution is provided by the following
\begin{lemma} \label{LemNSC}
 For given functions  $v_i\in \calR_+$, $i=1,2,\ldots,m$, satisfying
 \begin{equation}\label{erTi}
 \sum\nolimits_{i=1}^m v_i(z)\wdt{v_i}(z)=1,
 \end{equation}
 there exists solution \eqref{phiebi} of system \eqref{7.12}
 $(\phi_1,\phi_2,\ldots,\phi_{m-1})^T$, $\phi_j\in \calR_-$, $j=1,2,\ldots,m-1$,
 if and only if there exist functions $h_i\in\calR_+$, $1\leq i\leq m$, such that
 \begin{equation}\label{Corona2}
   \sum\nolimits_{i=1}^m h_iv_i=1.
 \end{equation}
\end{lemma}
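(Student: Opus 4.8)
The plan is to prove both implications by direct algebraic manipulation, the engine being that the two normalization identities, taken together, collapse to a single relation. Indeed, if \eqref{erTi} holds and $h_i\in\calR_+$ satisfy \eqref{Corona2}, then
\[
\sum_{i=1}^m\big(\wdt{v_i}-h_i\big)v_i=\sum_{i=1}^m v_i\wdt{v_i}-\sum_{i=1}^m h_iv_i=1-1=0,
\]
and this is exactly what links the first $m-1$ lines of \eqref{7.12} to the last one. Throughout I would use that $\calR_+$ is closed under sums, products and negation, and that for $v\in\calR_+$ the reflection $\wdt v$ has all its poles in $\ol{\bD}$.

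\emph{Necessity.} Assuming $\phi_j\in\calR_-$ solve \eqref{7.12}, I would read the required $h_i$ straight off the system: set $h_j:=\wdt{v_j}-\phi_jv_m\in\calR_+$ for $j=1,\dots,m-1$ (the $j$-th line of \eqref{7.12} says precisely $\phi_jv_m-\wdt{v_j}\in\calR_+$) and $h_m:=\sum_{j=1}^{m-1}\phi_jv_j+\wdt{v_m}\in\calR_+$ (the last line). Expanding $\sum_{i=1}^m h_iv_i$, the two groups of $\phi_j$-terms $\mp v_m\sum_{j<m}\phi_jv_j$ cancel, and \eqref{erTi} leaves $\sum_i h_iv_i=\sum_i v_i\wdt{v_i}=1$, which is \eqref{Corona2}.

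\emph{Sufficiency.} Conversely, suppose $h_i\in\calR_+$ satisfy \eqref{Corona2}; here I would first reduce to $v_m\not\equiv0$ by permuting the coordinates so that the distinguished last coordinate is nonzero (permuting rows preserves paraunitarity of the completion sought in Theorem~\ref{Th3}). The necessity computation forces $\phi_jv_m\equiv\wdt{v_j}-h_j$ modulo $\calR_+$, so I would set
\[
\phi_j:=\Big[\tfrac{\wdt{v_j}-h_j}{v_m}\Big]^-\in\calR_-,\qquad j=1,\dots,m-1,
\]
using the decomposition \eqref{Rdec}; thus $\phi_j\in\calR_-$ by construction. Writing $\psi_j:=(\wdt{v_j}-h_j)/v_m$ and $\psi_j=\phi_j+\psi_j^+$, the first $m-1$ conditions follow from $\phi_jv_m-\wdt{v_j}=-h_j-\psi_j^+v_m\in\calR_+$, while the last follows because the displayed relation, divided by $v_m$, gives $\sum_{j=1}^{m-1}\psi_jv_j+\wdt{v_m}=h_m$, whence $\sum_{j=1}^{m-1}\phi_jv_j+\wdt{v_m}=h_m-\sum_{j=1}^{m-1}\psi_j^+v_j\in\calR_+$.

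The algebra is light, and the membership $\phi_j\in\calR_-$ is free once we project, so only two points need care. The first is that passing from $\psi_j$ to $\psi_j^-$ preserves every condition of \eqref{7.12}; this holds because each discarded correction $\psi_j^+v_m$ and $\psi_j^+v_j$ is a product of $\calR_+$-functions and hence lies in $\calR_+$. The second, which I expect to be the genuine obstacle, is the division by $v_m$: it forces the non-degeneracy $v_m\not\equiv0$, which (as the elementary example $v=(\tfrac1{\sqrt2},\tfrac{z}{\sqrt2},0)$ shows) cannot simply be dropped and must be secured by the coordinate permutation above before the construction can even be written down. Note that the positivity hypothesis \eqref{Coronal2} of Theorem~\ref{Th3} is what will ultimately supply the Corona data \eqref{Corona2}; it is not needed for the equivalence itself.
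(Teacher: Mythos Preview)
Your argument is correct. For sufficiency you define $\phi_j=\big[(\wdt{v_j}-h_j)/v_m\big]^-$ and verify \eqref{7.12} exactly as the paper does. For necessity your route is more direct than the paper's: you read $h_j:=\wdt{v_j}-\phi_jv_m$ (for $j<m$) and $h_m:=\sum_{j<m}\phi_jv_j+\wdt{v_m}$ straight off the lines of \eqref{7.12} and check \eqref{Corona2} by a one-line cancellation, whereas the paper first parametrizes the general solution of the first $m-1$ conditions as $\phi_i=[\wdt{v_i}/v_m]^-+\psi_i$ (with $\psi_i\in\calR_-$, $\psi_iv_m\in\calR_+$), substitutes into the last line, and extracts $h_i=-\psi_iv_m$ from the resulting identity. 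Your version avoids dividing by $v_m$ in that direction, which is a genuine simplification. Your remark that the sufficiency construction needs $v_m\not\equiv0$, together with your counterexample showing the equivalence actually fails otherwise, is correct; the paper tacitly assumes this without comment, and your permutation fix is the right move for the downstream application to Theorem~\ref{Th3} even though it does not repair the lemma as literally stated.
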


\begin{proof}
General solutions of the first $m-1$ conditions in \eqref{7.12} have the form
\begin{equation}\label{GS1}
\phi_i=\left[\frac{\wdt{v_i}}{v_m}\right]^-+\psi_i, \text{ where } \psi_i\in \calR_- \text{ and }\psi_iv_m\in\calR_+.
\end{equation}
Substituting these relations into the last condition of \eqref{7.12}, we get
$$
\sum_{i=1}^{m-1}\left(\left[\frac{\wdt{v_i}}{v_m}\right]^-+\psi_i\right)v_i+\wdt{v_m}=
\sum_{i=1}^{m-1}\left(\frac{\wdt{v_i}}{v_m}+\psi_i-\left[\frac{\wdt{v_i}}{v_m}\right]^+\right)v_i+\wdt{v_m}
\in\calR_+
$$
Hence,
$$
\sum_{i=1}^{m-1}\frac{\wdt{v_i}v_i+\psi_iv_m v_i}{v_m}+\frac{v_m\wdt{v_m}}{v_m}=:h_m\in\calR_+
$$
and, by virtue of \eqref{erTi},
$$
1+\sum\nolimits_{i=1}^{m-1}\psi_i v_m v_i=h_m v_m.
$$
Thus, if we introduce the notation $h_i=-\psi_i v_m$, $i=1,2,\ldots,m-1$, where $h_i\in\calR_+$ due to the last condition in \eqref{GS1},  we get \eqref{Corona2} and the first part of the lemma is proved.

Suppose now that \eqref{Corona2} holds and define functions
$$
\phi_i=\left[\frac{\wdt{v_i}-h_i}{v_m}\right]^-, \;\;i=1,2,\ldots,m-1.
$$
Then the first $m-1$ conditions in \eqref{7.12} are satisfied and, for the last condition, we have
\begin{gather*}
\sum_{i=1}^{m-1} \left[\frac{\wdt{v_i}-h_i}{v_m}\right]^- v_i+\wdt{v_m}=
\sum_{i=1}^{m-1} \left(\frac{\wdt{v_i}-h_i}{v_m}\right) v_i+\wdt{v_m}-\sum_{i=1}^{m-1} \left[\frac{\wdt{v_i}-h_i}{v_m}\right]^+ v_i \\
=\frac{\sum\nolimits_{i=1}^{m-1}\wdt{v_i}v_i+\wdt{v_m}v_m-\sum\nolimits_{i=1}^{m-1}h_i v_i}{v_m} -\sum_{i=1}^{m-1} \left[\frac{\wdt{v_i}-h_i}{v_m}\right]^+ v_i \\
=\frac{1-\sum\nolimits_{i=1}^{m-1}h_i v_i}{v_m} -\sum_{i=1}^{m-1} \left[\frac{\wdt{v_i}-h_i}{v_m}\right]^+ v_i=h_m-\sum_{i=1}^{m-1} \left[\frac{\wdt{v_i}-h_i}{v_m}\right]^+ v_i\in\calR_+.
\end{gather*}
Hence the last condition of \eqref{7.12} is also satisfied and the lemma is proved.
\end{proof}

\begin{remark}
  It is well known that condition \eqref{Coronal2} is equivalent to  \eqref{Corona2} satisfied  for some polynomials $h_i\in\calP_+$, $1\leq i\leq m$.  Moreover, these polynomials can be explicitly found  by solving the corresponding system of linear algebraic equations.
\end{remark}

We proceed with the proof of Theorem \ref{Th3} as follows. Suppose
$$
V_1(1)=C=(c_1,c_2,\ldots,c_m)^T\in\bC^{m\tm 1}.
$$
We can complete $C$ to the unitary matrix $W$, i.e., we can construct a unitary matrix $W$ with the first column $C$.

Lemma \ref{LemNSC} guarantees that we can construct a row vector \eqref{phiebi} which is the solution of  system \eqref{7.12}. Use these $\phi_i$, $i=1,2,\ldots,m-1$,  to construct a matrix function $F$ defined by equation \eqref{2.1}, and let $U$ be the corresponding paraunitary matrix determined according to Theorem \ref{Th1}. We have that the columns
$$
\hat{U}_j=(u_{1j},u_{2j},\ldots,u_{m-1,j},\wdt{u_{mj}})^T,\;\;j=1,2,\ldots,m,
$$
of the matrix $U$ are solutions of system \eqref{7.1} (see Remark \ref{RemUsol}) and
$$
\hat{U}_j(1)=(\dt_{1j},\dt_{2j},\ldots,\dt_{mj})^T,\;\;\;j=1,2,\ldots,m,
$$
(see \eqref{U1I}). Hence
\begin{equation}\label{U_C}
  V_C=\sum\nolimits_{j=1}^m c_j\hat{U}_j,
\end{equation}
 is the solution of system \eqref{7.1} which satisfies $V_C(1)=C$. However, such solution of the system \eqref{7.1} is unique (see Corollary \ref{Cor4.2}). Therefore, $V_C$ defined by \eqref{U_C}, which is the first column of $ U(z)\cdot W$, coincides with \eqref{V1Com}. Obviously, the matrix
\begin{equation}\label{UW}
V(z)= U(z)\cdot W
\end{equation}
 is paraunitary, therefore the proof of Theorem \ref{Th3} is completed.

\section{Numerical examples} \label{Sect8}
\subsection*{5.1}
In this section, we provide some examples of exact constructions which rely on the  methods presented in this paper. First we factorize the following matrix
\begin{equation}\label{IEE0}
S(z)=
\begin{pmatrix}2z^{-1}+6+2z&11z^{-1}+22+7z\\
7z^{-1}+22+11z&38z^{-1}+84+38z\end{pmatrix},
\end{equation}
which is positive definite, however, it has a singularity (of order 4) at the isolated point $z=1$. In particular, $\det S(z)=-z^{-2}+2-z^2=(z^{-2}-1)(z^2-1)$. An approximate factorization of this matrix, varying in both speed and accuracy, is given in  \cite{IEEE2011, IEEE2018}. Now, by using Theorem \ref{Th1}, we can factorize \eqref{IEE0} exactly as it admits exact lower-upper factorization \eqref{SMM} with
\begin{equation}\label{Mz2}
M(z)=
\begin{pmatrix}b+az&0\\
\frac{7+22z+11z^2}{a+bz}&\frac{1-z^2}{b+az}\end{pmatrix},
\end{equation}
where $a=\sqrt{3-\sqrt{5}}$ and $b=\sqrt{3+\sqrt{5}}$. Arguing as in \eqref{Mmm3}, the matrix \eqref{Mz2} can be represented as
$$
M(z)=
\begin{pmatrix}b+az&0\\
0&\frac{1-z^2}{b+az}\end{pmatrix}
\begin{pmatrix}1&0\\
\phi &1\end{pmatrix},
$$
where $\phi=(7+22z+11z^2)/(a+bz)$. The function $\phi$ has a single pole in $\bT_+$ at $z_0=-a/b=(\sqrt{5}-3)/2$ with the residue $\gm_0=(25-11\sqrt{5})/2.$ Therefore, the function $\phi$ can be split into $\phi=\phi^++\phi^-$, where $\phi^-(z)=\gm_0/(z-z_0)$ and $\phi^+\in\calR_+$. Hence, we have to take the $2\tm 2$ matrix function
$$
F(z)=\begin{pmatrix}1&0\\
\frac{\gm_0}{z-z_0} &1\end{pmatrix}
$$
in the role of \eqref{2.1} and construct a paraunitary matrix $U$ according to Theorem \ref{Th1}. Postmultiplying $M$ by $U$, we get a spectral factor
\begin{equation}\label{SMU}
S_+(z)=M(z)U(z).
\end{equation}
Formulas \eqref{ui} and \eqref{um} suggest that we have to search solutions $u_1$ and $u_2$ of the system \eqref{7.1} in the form
$$
\wdt{u_1}(z)=C_1+\frac{C_{11}}{z-z_0}\;\;\text{ and }\;\; \wdt{u_2}(z)=C_2+\frac{C_{21}}{z-z_0}\,.
$$
Consequently,
$$
u_1(z)=\frac{C_1+(C_{11}-C_1z_0)z}{1-z_0z}\;\;\text{ and }\;\; u_2(z)=\frac{C_2+(C_{21}-C_2z_0)z}{1-z_0z}
$$
(since we deal with real coefficients, we do not use the conjugate sign for $C_i$) and the system derived from the equations \eqref{U1I}, \eqref{GmcC}, and \eqref{GmcC2} has the form
\begin{equation}\label{4sys}
  \begin{cases}
    C_1+\frac{C_{11}}{1-z_0}=1\\  C_2+\frac{C_{21}}{1-z_0}=0\\
    \gm_0\left(C_2+\frac{C_{21}z_0}{1-z_0^2}\right)-C_{11}=0\\
    \gm_0\left(C_1+\frac{C_{11}z_0}{1-z_0^2}\right)+C_{21}=0.
  \end{cases}
\end{equation}
For $u_{12}$ and $u_{22}$, we just have to change the right-hand side of \eqref{4sys} to $(0,1,0,0)^T$.

The code was designed to perform the exact arithmetics in the quadratic  field $\bQ(\sqrt{5})$ and the following solutions of \eqref{4sys} and its companion system were obtained by the Gaussian elimination:  $C_1=(35-7\sqrt{5})/20$, $C_{11}=(-11+5\sqrt{5})/4$, $C_2=(5-\sqrt{5})/20$,  $C_{21}=(-3+\sqrt{5})/4$ for $u_{11}$, $u_{21}$, and
         $C_1=(-5+\sqrt{5})/20$, $C_{11}=(3-\sqrt{5})/4$, $C_2=(35-7\sqrt{5})/20$,  $C_{21}=(-11+5\sqrt{5})/4$ for $u_{12}$, $u_{22}$. Hence the unitary matrix \eqref{2.5} was constructed
\begin{equation}\label{U22}
 \begin{pmatrix}
 \frac{35-7\sqrt{5}}{20}+\frac{-11+5\sqrt{5}}{4(1/z-z_0)}&
 \frac{-5+\sqrt{5}}{20}+\frac{3-\sqrt{5}}{4(1/z-z_0)}\\
 \frac{5-\sqrt{5}}{20}+\frac{-3+\sqrt{5}}{4(z-z_0)}&
 \frac{35-7\sqrt{5}}{20}+\frac{-11+5\sqrt{5}}{4(z-z_0)}
 \end{pmatrix} =
 c \begin{pmatrix}\frac{7+3z}{az+b}&\frac{-1+z}{az+b}\\[2mm]
 \frac{-1+z}{a+bz}& \frac{3+7z}{a+bz}
 \end{pmatrix},
\end{equation}
where $c=b(5-\sqrt{5})/20=\sqrt{3+\sqrt{5}}\sqrt{25-10\sqrt{5}+5}/20=2\sqrt{10}/20=1/\sqrt{10}$.

The spectral factor \eqref{SMU} is equal to the product of \eqref{Mz2} and \eqref{U22}. Given that the theory implies that the entries of  $S_+$ are polynomials of order $2$, we can assert that the obtained rational functions are indeed polynomials, allowing for exact divisions. Indeed we get
$$
S_+= c \begin{pmatrix}{7+3z}&-1+z\\
 {24+16z}& {-2+2z}
 \end{pmatrix},
$$
a result that can be verified through direct multiplication.

\subsection*{5.2}
In the next example, we construct a paraunitary matrix with the first row
\begin{equation}\label{91}
   V_1(z)=\begin{pmatrix}
         \frac{3z+3}{5z+6} & \frac{4z+5}{5z+6}& \frac{z+1}{6z+5}
       \end{pmatrix}     =:(v_1(z), v_2(z),\wdt{v_3}(z)).
\end{equation}
One can check that $|v_1(z)|^2+|v_2(z)|^2+|v_3(z)|^2=1$ for each $z\in\bT$, i.e., condition \eqref{112} of Theorem \ref{Th3} is satisfied and we follow the procedures described in its proof in order to perform this construction.

Since $v_3(z)=(z+1)/(5z+6)$ is analytic in $\bT_+$ together with its inverse, the solution $\phi_1, \phi_2$ of the system \eqref{7.12} can be identified by
$$
\phi_1=\left[\frac{\wdt{v_1}}{v_3}\right]^-= \frac{11}{2(6z+5)}=:\frac{\gm_1}{z-z_0}
 \text{ and } \phi_2=\left[\frac{\wdt{v_2}}{v_3}\right]^-= \frac{-11}{6(6z+5)}=:\frac{\gm_2}{z-z_0},
$$
where $\gm_1=11/12$, $\gm_2=-11/36$, and $z_0=-5/6$. Thus, we construct the matrix function $F$ of the form \eqref{2.1} and search for the corresponding paraunitary matrix \eqref{2.5}. Since $z_0=-5/6$ is a single pole of functions $\phi_1$ and $\phi_2$, we search the solutions $(u_1,u_2,u_3)$ of the system \eqref{7.1} in the form
$$
\wdt{u_j}(z)=C_j+\frac{C_{j1}}{z-z_0}\,,\;\;\;j=1,2,3,
$$
and the system derived from the equations \eqref{U1I}, \eqref{GmcC}, and \eqref{GmcC2} has the form
\begin{equation}\label{6sys}
  \begin{cases}
    C_1+\frac{C_{11}}{1-z_0}=\dt_{i1}\\
     C_2+\frac{C_{21}}{1-z_0}=\dt_{i2}\\
     C_3+\frac{C_{31}}{1-z_0}=\dt_{i3}\\
    \gm_1\left(C_3+\frac{C_{31}z_0}{1-z_0^2}\right)-C_{11}=0\\
    \gm_2\left(C_3+\frac{C_{31}z_0}{1-z_0^2}\right)-C_{21}=0\\
    \gm_1\left(C_1+\frac{C_{11}z_0}{1-z_0^2}\right)+\gm_2\left(C_2+\frac{C_{21}z_0}{1-z_0^2}\right)+C_{31}=0.
  \end{cases}
\end{equation}
Taking $i=1,2$, and $3$ in the system \eqref{6sys}, we get the coefficients of the entries of the first, second and third columns of \eqref{2.5}. Thus we obtain
$$
U(z)=\begin{pmatrix}
\frac{19}{22}+\frac{3z}{2(5z+6)} & \frac{1}{22}-\frac{z}{2(5z+6)} & \frac{1}{22}-\frac{z}{2(5z+6)}\\[1mm]
\frac{1}{22}-\frac{z}{3(5z+6)} & \frac{65}{66}+\frac{z}{6(5z+6)} & \frac{65}{66}+\frac{z}{6(5z+6)}\\[1mm]
\frac{1}{22}-\frac{1}{2(6z+5)} & \frac{-1}{66}+\frac{1}{6(6z+5)} & \frac{-1}{66}+\frac{1}{6(6z+5)}
\end{pmatrix}.
$$
For the function $V_1$ defined by \eqref{91}, we have $V_1(1)=\frac{1}{11}(6,9,2)$. We used the Cayley transform $V=(I-A)(I+A)^{-1}$ and searched for skew-symmetric matrix $A$ which satisfies the conditions $(I-A)_1=V_1(1)(I+A)$, where $(I-A)_1$ is the first row of $I-A$. This provides the orthogonal matrix $W$ with rational entries and the first column $V_1^T(1)$:
$$
W=\frac{1}{55}\begin{pmatrix}
                30 & -45 & -10 \\
                45 & 26 & 18 \\
                10 & 18 & -51
              \end{pmatrix}
$$
Using formula \eqref{UW}, we get a paraunitary matrix $V$ with the first column $V_1^T$:
$$
V(z)=\begin{pmatrix}
\frac{1}{2}+\frac{z}{2(5z+6)} & -\frac{7}{10}-\frac{13z}{10(5z+6)} & -\frac{1}{10}-\frac{9z}{10(5z+6)}\\[1mm]
\frac{5}{6}-\frac{z}{6(5z+6)} & \frac{13}{30}+\frac{13z}{30(5z+6)} & \frac{3}{10}+\frac{3z}{10(5z+6)}\\[1mm]
\frac{1}{6}+\frac{1}{6(6z+5)} & \frac{7}{30}+\frac{31}{30(6z+5)} & -\frac{4}{5}-\frac{7}{5(6z+5)}
\end{pmatrix}=\begin{pmatrix}
         \frac{3z+3}{5z+6} & -\frac{24z+21}{5(5z+6)}& -\frac{7z+3}{5(5z+6)}\\
         \frac{4z+5}{5z+6} & \frac{13z+13}{5(5z+6)} &\frac{9z+9}{5(5z+6)}\\
         \frac{z+1}{6z+5}  & \frac{7z+11}{5(6z+5)}& -\frac{24z+27}{5(6z+5)}
       \end{pmatrix}
$$
and the matrix we search for is $V^T$.

\section{Acknowledgments}
{
The work was supported by the EU through the H2020-MSCA-RISE-2020 project EffectFact, Grant agreement ID: 101008140. The third author was also supported by Faculty Research funding from the Division of Science and Mathematics, NYUAD.
}


\def\cprime{$'$}
\providecommand{\bysame}{\leavevmode\hbox to3em{\hrulefill}\thinspace}
\providecommand{\MR}{\relax\ifhmode\unskip\space\fi MR }
\providecommand{\MRhref}[2]{%
  \href{http://www.ams.org/mathscinet-getitem?mr=#1}{#2}
}
\providecommand{\href}[2]{#2}

\end{document}